\documentclass[12pt, twoside, leqno]{article}



\usepackage{amsmath,amsthm}
\usepackage{amssymb}

\usepackage{enumitem}

\usepackage{graphicx}

\usepackage[T1]{fontenc}
\usepackage{enumerate, latexsym, amsmath, amsfonts, amssymb, amsthm, graphicx, color}
\usepackage{bm}
\def\pmod #1{\ ({\rm{mod}}\ #1)}

\def\F{\Bbb F}

\def\bg{\bigg}
\def\({\bg(}
\def\){\bg)}

\def\mod #1{\ {\rm mod}\ #1}

\def\s{{\bm s}}
\def\O{{\mathcal{O}}}
\def\p{{\mathfrak{p}}}

\def\sign{{\rm sign}}
\def\Gal{{\rm Gal}}


\pagestyle{myheadings}
\markboth{H.-L. Wu, L.-Y. Wang and H. Pan}{On finite field analogues of determinants involving the Beta function}



\newtheorem{theorem}{Theorem}[section]
\newtheorem{corollary}[theorem]{Corollary}
\newtheorem{lemma}[theorem]{Lemma}



\theoremstyle{definition}

\newtheorem{remark}[theorem]{Remark}



\numberwithin{equation}{section}


\frenchspacing

\textwidth=13.5cm
\textheight=23cm
\parindent=16pt
\oddsidemargin=-0.5cm
\evensidemargin=-0.5cm
\topmargin=-0.5cm





\begin{document}


\baselineskip=17pt


\title{On finite field analogues of determinants involving the Beta function}

\author{Hai-Liang Wu\\
	School of Science\\ 
	Nanjing University of Posts and Telecommunications\\
	210023 Nanjing, People's Republic of China\\
	E-mail: whl.math@smail.nju.edu.cn
	\and 
	Li-Yuan Wang\\
	School of Physical and Mathematical Sciences\\ 
	Nanjing Tech University\\
	211816 Nanjing, People's Republic of China\\
	E-mail: wly@smail.nju.edu.cn
	\and 
	Hao Pan*\\
	School of Applied Mathematics\\ 
	Nanjing University of Finance and Economics\\
	210046 Nanjing, People's Republic of China\\
	E-mail: haopan79@zoho.com}
\date{}

\maketitle


\renewcommand{\thefootnote}{}

\footnote{2020 \emph{Mathematics Subject Classification}: Primary 11L05, 11C20; Secondary 15B33, 11R18.}

\footnote{\emph{Key words and phrases}: finite fields, determinants, Jacobi sums.}
\footnote{*Corresponding author.}
\renewcommand{\thefootnote}{\arabic{footnote}}
\setcounter{footnote}{0}


\begin{abstract}
Motivated by the works of L. Carlitz, R. Chapman and Z.-W. Sun on cyclotomic matrices, in this paper, we investigate certain cyclotomic matrices concerning Jacobi sums over finite fields, which can be viewed as finite field analogues of certain matrices involving the Beta function. For example, let $q>1$ be a prime power and let $\chi$ be a generator of the group of all multiplicative characters of $\mathbb{F}_q$. Then we prove that 
$$\det\left[J_q(\chi^i,\chi^j)\right]_{1\le i,j\le q-2}=(q-1)^{q-3},$$
where $J_q(\chi^i,\chi^j)$ is the Jacobi sum over $\mathbb{F}_q$. This is a finite analogue of 
$$\det [B(i,j)]_{1\le i,j\le n}=(-1)^{\frac{n(n-1)}{2}}\prod_{r=0}^{n-1}\frac{(r!)^3}{(n+r)!},$$
where $B$ is the Beta function. Also, if $q=p\ge5$ is an odd prime, then we show that 
$$\det \left[J_p(\chi^{2i},\chi^{2j})\right]_{1\le i,j\le (p-3)/2}=\frac{1+(-1)^{\frac{p+1}{2}}p}{4}\left(\frac{p-1}{2}\right)^{\frac{p-5}{2}}.$$
\end{abstract}

\section{Introduction}
\subsection{Notations}
Let $q=p^n$ be a prime power with $p$ prime and $n\in\mathbb{Z}^+$. Let $\mathbb{F}_q$ be the finite field of $q$ elements, and let $\mathbb{F}_q^{\times}$ be the group of all nonzero elements of $\mathbb{F}_q$. Let $\widehat{\mathbb{F}_q^{\times}}$ denote the group of all multiplicative characters of $\mathbb{F}_q$. In addition, for any $\psi\in\widehat{\mathbb{F}_q^{\times}}$ we define $\psi(0)=0$. Throughout out this paper, we let $\chi$ be a generator of $\widehat{\mathbb{F}_q^{\times}}$. Also, For any $\chi^i,\chi^j\in\widehat{\mathbb{F}_q^{\times}}$, the Jacobi sum of $\chi^i$ and $\chi^j$ is defined by 
\begin{equation*}
	J_q(\chi^i,\chi^j):=\sum_{x\in\mathbb{F}_q}\chi^i(x)\chi^j(1-x).
\end{equation*}

\subsection{Background and Motivations} We first introduce some related works involving cyclotomic matrices. The earliest research on cyclotomic matrices came from Lehmer \cite{Lehmer} and Carlitz \cite{carlitz}. For example, 
given a nontrivial character $\psi\in\widehat{\mathbb{F}_p^{\times}}$, 
Carlitz studied the characteristic polynomial of the cyclotomic matrix 
$$C_p(\psi):=\left[\psi(j-i)\right]_{1\le i,j\le p-1}.$$ In particular, when $\psi$ is the quadratic character, Carlitz showed that the characteristic polynomial of $C_p(\psi)$ is 
$$\left(t^2-(-1)^{\frac{p-1}{2}}p\right)^{\frac{p-3}{2}}\left(t^2-(-1)^{\frac{p-1}{2}}\right).$$ 

Along this line, Chapman \cite{chapman} initiated the study of several variants of Carlitz's results. Surprisingly, these variants have close connections with the real quadratic field $\mathbb{Q}(\sqrt{p})$. For example, let $(\frac{\cdot}{p})$ be the Legendre symbol and let $$\varepsilon_p^{(2-(\frac{2}{p}))h_p}=a_p+b_p\sqrt{p}\ (a_p,b_p\in\mathbb{Q}),$$
where $\varepsilon_p>1$ and $h_p$ are the fundamental unit and class number
of $\mathbb{Q}(\sqrt{p})$ respectively. Chapman conjectured that 
$$\det\left[\left(\frac{j-i}{p}\right)\right]_{1\le i,j\le \frac{p+1}{2}}=\begin{cases}
	-a_p & \mbox{if}\ p\equiv 1\pmod4,\\
	1    & \mbox{if}\ p\equiv 3\pmod4.
\end{cases}$$
This challenging conjecture was later known as Chapman's ``evil determinant" conjecture, and was confirmed by Vsemirnov \cite{V1,V2}. 

In recent years, Sun \cite{S19} and Krachun and his collaborators \cite{Krachun} studied some  variants of the above determinant. For example, Sun proved that $$-\det\left[\left(\frac{i^2+j^2}{p}\right)\right]_{1\le i,j\le (p-1)/2}$$
is a quadratic residue modulo $p$. Later the first author \cite{W21} proved that for any $c\in\F_p$ with $c\neq\pm2$, 
$$\det\left[\left(\frac{i^2+cij+j^2}{p}\right)\right]_{0\le i,j\le p-1}=0$$
whenever the curve defined by the equation $y^2=x(x^2+cx+1)$ is a supersingular elliptic curve over $\F_p$. Readers may refer to \cite{Krachun,S19,W21,WSW} for recent progress on this topic. 

After introducing the above relevant research results, we now describe our research motivations. 

For any complex numbers $x,y$ with Re$(x)>0$ and Re$(y)>0$, the Beta function is defined by 
$$B(x,y)=\int_{0}^{1}t^{x-1}(1-t)^{y-1}dt.$$
If we define a function 
$$J_q:\ \widehat{\mathbb{F}_q^{\times}}\times\widehat{\mathbb{F}_q^{\times}}\rightarrow\mathbb{Q}(e^{2\pi{\bf i}/(q-1)})$$
by sending $(\chi^i,\chi^j)$ to the Jacobi sum $J_q(\chi^i,\chi^j)$, then it is known that $J_q$ is indeed a finite analogue of the Beta function. Readers may refer to \cite{F,Greene} for the detailed introduction on this topic.

Let $n$ be a positive integer. We first consider the following matrix
$$B_n:=\left[B(i,j)\right]_{1\le i,j\le n}.$$
It is known that for any positive integers $i,j$ we have 
$$B(i,j)=\frac{\Gamma(i)\Gamma(j)}{\Gamma(i+j)},$$
where $\Gamma(\cdot)$ is the Gamma function. By this and \cite[(4.8)]{N} we obtain 
\begin{equation}\label{Eq. det of Bn}
	\det B_n=(-1)^{\frac{n(n-1)}{2}}\prod_{r=0}^{n-1}\frac{(r!)^3}{(n+r)!}.
\end{equation}
Recall that $\chi$ is a generator of $\widehat{\mathbb{F}_q^{\times}}$. Since $J_q$ is a finite field analogue of the Beta function and $\mathbb{F}_q^{\times}\cong\widehat{\mathbb{F}_q^{\times}}$, the determinant 
\begin{equation}\label{Eq. finite field analogue}
	\det\left[J_q(\chi^i,\chi^j)\right]_{1\le i,j\le q-2}
\end{equation}
can be naturally viewed as a finite field analogue of (\ref{Eq. det of Bn}). 

Motivated by the above results, it is natural to consider the matrices with Jacobi sums as their entries. In general, for any positive integer $k\mid q-1$, we define the cyclotomic matrix
\begin{equation}\label{Eq. definition of Jq(k)}
	J_{\chi,q}(k):=\left[J_q(\chi^{ki},\chi^{kj})\right]_{1\le i,j\le (q-1-k)/k}
\end{equation}
which concerns the Jacobi sums of nontrivial $k$-th multiplicative characters. For some large $k$, it is easy to evaluate $\det J_{\chi,q}(k)$. For example, 
\begin{itemize}
	\item $k=(q-1)/2$: in this case we have 
	$$\det J_{\chi,q}(k)=J_q(\chi_2,\chi_2)=(-1)^{(q+1)/2},$$
	where $\chi_2=\chi^{(q-1)/2}$ is the quadratic character.
	
	\item  $k=(q-1)/3$: in this case we have $q\equiv 1\pmod 3$ and 
	$$\det J_{\chi,q}(k)=\left|\begin{array}{cc}
		J_q(\chi_3,\chi_3)         & J_q(\chi_3,\bar{\chi}_3)\\
		J_q(\chi_3,\bar{\chi}_3)   & J_q(\bar{\chi_3},\bar{\chi}_3)
	\end{array}\right|=\left|\begin{array}{cc}
		J_q(\chi_3,\chi_3)         & -1\\
		-1   & J_q(\bar{\chi_3},\bar{\chi}_3)
	\end{array}\right|=q-1,$$
	where $\chi_3=\chi^{(q-1)/3}$ is a character of order $3$. 
\end{itemize}

\subsection{Some Geometric Interpretations} Here we briefly introduce some geometric interpretations of (\ref{Eq. definition of Jq(k)}). Readers may refer to \cite[Section 6]{F} for a detailed introduction on this topic.

Let notations be as above and let $m=(q-1)/k$. For any $1\le i,j\le m-1$, consider the Fermat curves 
$$y^m=x^i(1-x)^j.$$ Let $\chi_m$ be a character of order $m$. In such cases, their Galois representations can be determined by the Jacobi sums $J_q(\chi_m^{si},\chi_m^{sj})$, and their period matrix can be expressed in terms of the beta values $B\left(\frac{si}{m},\frac{sj}{m}\right)$ for certain $s\in\mathbb{Z}/m\mathbb{Z}$. 

By the above, if we let $\chi=\chi_{\p}$ be the Teich\"{u}muller character defined by (\ref{Eq. definition of the T character}) in Section 2 and let $\chi_m=\chi_{\p}^{-k}$ be a character of order $m$, then the matrix $$\left[J_q(\chi_m^{i},\chi_m^{j})\right]_{1\le i,j\le m-1}$$
is a finite field version of the period matrix given by $B\left(\frac{si}{m},\frac{sj}{m}\right)$'s as above.

\subsection{Main Results} Now we state our main results of this paper. 

\begin{theorem}\label{Thm. general Jq(k)}
	Let $q=p^n$ be a prime power with $p$ prime and $n\in\mathbb{Z}^+$. Let $k\mid q-1$ be a positive integer and let $m=(q-1)/k$. Then the following results hold.
	
	{\rm (i)} $\det J_{\chi,q}(k)\in\mathbb{Z}$ and is independent of the choice of the generator $\chi.$
	
	{\rm (ii)} We have the congruence 
	\begin{equation}\label{Eq. congruence of det Jq(k)}
		\det J_{\chi,q}(k)\equiv (-1)^{\frac{(k+1)(m^2-m)}{2}} \pmod p.
	\end{equation}
\end{theorem}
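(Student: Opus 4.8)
The plan is to prove part (i) first, since Galois invariance both forces $\det J_q(k)$ into $\Z$ and lets us evaluate it for a convenient generator, and then to deduce the congruence (ii) by reducing the matrix modulo a prime ideal above $p$. For (i), I would use the action of $\Gal(\Q(\zeta_{q-1})/\Q)$ on the Jacobi sums: for $\sigma_t$ with $\sigma_t(\zeta_{q-1})=\zeta_{q-1}^t$ (so $\gcd(t,q-1)=1$) one has $\sigma_t\big(J_q(\chi^{ki},\chi^{kj})\big)=J_q(\chi^{kit},\chi^{kjt})$. Because $k\mid q-1$ and $\gcd(t,q-1)=1$ we get $\gcd(t,m)=1$, so $\pi_t\colon i\mapsto it\bmod m$ is a permutation of $\{1,\dots,m-1\}$; hence $\sigma_t$ acts on $J_q(k)$ as the simultaneous permutation of rows and columns by $\pi_t$, which leaves the determinant unchanged. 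Thus $\det J_q(k)$ is fixed by the whole Galois group, so it is rational, and as an algebraic integer it lies in $\Z$. Replacing $\chi$ by another generator $\chi^s$ ($\gcd(s,q-1)=1$) induces the same kind of simultaneous permutation, giving independence of the choice of $\chi$.

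For (ii), I would use (i) to specialize $\chi$ to the inverse Teichm\"{u}ller character, $\chi=\chi_{\p}^{-1}$. Then $\chi^{ki}=\chi_{\p}^{-ki}$ with $1\le ki\le q-1-k\le q-2$, so the congruence \eqref{Eq. congruence involving Jacobi sums} applies entrywise and yields, modulo $\p$,
\[
\det J_q(k)\equiv\det\Big[-\binom{k(i+j)}{ki}\Big]_{1\le i,j\le m-1}=(-1)^{m-1}\det\Big[\binom{k(i+j)}{ki}\Big]_{1\le i,j\le m-1}.
\]
Since both outer terms are rational integers, this is in fact a congruence modulo $p$, and the task reduces to evaluating $D:=\det\big[\binom{k(i+j)}{ki}\big]_{1\le i,j\le m-1}$ modulo $p$.

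The crux, which I expect to be the main obstacle, is to recognize that this binomial matrix is anti-triangular modulo $p$. By Kummer's theorem, $p\mid\binom{ki+kj}{ki}$ precisely when adding $ki$ and $kj$ in base $p$ produces a carry. Here $1\le ki,kj\le q-1-k<p^n$, so each summand has at most $n$ base-$p$ digits; if $i+j\ge m+1$ then $ki+kj=k(i+j)\ge km+k=(q-1)+k\ge q=p^n$, which forces a carry, whence $\binom{k(i+j)}{ki}\equiv0\pmod p$. On the anti-diagonal $i+j=m$ we have $k(i+j)=q-1$, all of whose base-$p$ digits equal $p-1$, so Lucas' theorem gives $\binom{q-1}{ki}\equiv(-1)^{ki}\pmod p$. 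Thus modulo $p$ the matrix vanishes strictly below the anti-diagonal and carries $(-1)^{ki}$ on it, so the only nonzero term in the Leibniz expansion comes from the reversal permutation $i\mapsto m-i$.

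This gives $D\equiv\sgn(i\mapsto m-i)\prod_{i=1}^{m-1}(-1)^{ki}\equiv(-1)^{(m-1)(m-2)/2}(-1)^{km(m-1)/2}\pmod p$. Multiplying by the factor $(-1)^{m-1}$ from the reduction and simplifying the exponent,
\[
(m-1)+\frac{(m-1)(m-2)}{2}+\frac{km(m-1)}{2}=\frac{(k+1)(m^2-m)}{2},
\]
yields the asserted congruence \eqref{Eq. congruence of det Jq(k)}. The case $p=2$ is immediate, since then every anti-diagonal entry is $\equiv1$ and both sides reduce to $1$ modulo $2$.
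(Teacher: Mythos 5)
Your proposal is correct, and its overall skeleton matches the paper's: part (i) is the paper's argument verbatim (the Galois action permutes rows and columns simultaneously, so $\det J_q(k)$ is Galois-fixed, hence a rational algebraic integer), and part (ii) likewise proceeds by using (i) to specialize $\chi$ to a Teichm\"{u}ller-type character, reducing entries modulo $\mathfrak{p}$ via \eqref{Eq. congruence involving Jacobi sums}, and exploiting triangularity. The difference lies in which tools carry the two key steps of (ii). The paper keeps $\chi=\chi_{\mathfrak{p}}$, reverses the columns by $j\mapsto -j\bmod m$ (sign $(-1)^{(m-1)(m-2)/2}$, quoted from Lemma \ref{Lem. the Lerch permutation}), kills the entries with $i<j$ by rewriting $\chi_{\mathfrak{p}}^{ki}=\chi_{\mathfrak{p}}^{-(q-1-ki)}$ and applying \eqref{Eq. congruence involving Jacobi sums} together with Lucas (Lemma \ref{Lem. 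Lucas congruence}), and evaluates the resulting diagonal \emph{exactly} via the Jacobi-sum identity $J_q(\chi_{\mathfrak{p}}^{ki},\chi_{\mathfrak{p}}^{-ki})=(-1)^{ki+1}$. You instead take $\chi=\chi_{\mathfrak{p}}^{-1}$, so the entire matrix reduces to the binomial matrix $\left[\binom{k(i+j)}{ki}\right]$ modulo $\mathfrak{p}$, and you argue purely combinatorially: Kummer's carry criterion (nowhere used in the paper) gives the vanishing below the anti-diagonal, Lucas gives $\binom{q-1}{ki}\equiv(-1)^{ki}$ on it, and the reversal permutation in the Leibniz expansion supplies the same sign that the paper obtains from its column permutation. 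The computations agree term by term --- your anti-diagonal entries $-\binom{q-1}{ki}\equiv(-1)^{ki+1}$ are exactly the paper's diagonal Jacobi sums --- so the trade-off is this: your route stays entirely on the binomial side of \eqref{Eq. congruence involving Jacobi sums} and needs no exact Jacobi-sum evaluation, while the paper's route avoids Kummer's theorem and requires no separate treatment of $p=2$ (your case split there is genuinely needed, since your digit-wise derivation of $\binom{q-1}{ki}\equiv(-1)^{ki}$ uses that $p$ is odd, whereas the paper's diagonal identity is exact for every $q$).
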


\begin{remark}\label{Remark of Thm. 1}
	As mentioned in Theorem \ref{Thm. general Jq(k)}(i), $\det J_{\chi,q}(k)$ is independent of the choice of the generator $\chi$. Thus, from now on, we abbreviate $\det J_{\chi,q}(k)$ as $\det J_q(k)$. 
\end{remark}

By Theorem \ref{Thm. general Jq(k)} and the congruence (\ref{Eq. congruence involving Jacobi sums}) in Section 2, we can obtain the following result directly. 

\begin{corollary}\label{Corollary of Thm. 1}
	Let $q=p^n$ be a prime power with $p$ prime and $n\in\mathbb{Z}^+$. Let $k\mid q-1$ be a positive integer and let $m=(q-1)/k$. Then 
	$$\det\left[\binom{ki+kj}{ki}\right]_{1\le i,j\le m-1}\equiv (-1)^{\frac{(k+1)m^2-(k-1)m-2}{2}}\pmod p.$$
\end{corollary}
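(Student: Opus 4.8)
The plan is to feed the congruence \eqref{Eq. congruence involving Jacobi sums} directly into Theorem \ref{Thm. general Jq(k)}. First I would note that since $\gcd(-1,q-1)=1$, the inverse Teichm\"uller character $\chi_{\p}^{-1}$ is again a generator of $\widehat{\mathbb{F}_q^{\times}}$. By Theorem \ref{Thm. general Jq(k)}(i) the value $\det J_q(k)$ is independent of the generator used to build the matrix $J_q(k)$, so I may compute it with the particular choice $\chi=\chi_{\p}^{-1}$. With this choice the $(i,j)$-entry of $J_q(k)$ equals $J_q(\chi_{\p}^{-ki},\chi_{\p}^{-kj})$, and since $1\le i,j\le m-1$ forces $1\le ki,kj\le q-1-k\le q-2$, the congruence \eqref{Eq. congruence involving Jacobi sums} applies with $i,j$ replaced by $ki,kj$:
$$J_q(\chi_{\p}^{-ki},\chi_{\p}^{-kj})\equiv -\binom{ki+kj}{ki}\pmod{\p}.$$

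Next I would pass to determinants entrywise. As $J_q(k)$ is an $(m-1)\times(m-1)$ matrix each of whose entries is congruent modulo $\p$ to $-\binom{ki+kj}{ki}$, pulling the sign out of each of the $m-1$ rows gives
$$\det J_q(k)\equiv (-1)^{m-1}\det\left[\binom{ki+kj}{ki}\right]_{1\le i,j\le m-1}\pmod{\p}.$$
The point that makes the reduction rigorous is integrality: by Theorem \ref{Thm. general Jq(k)}(i) the left-hand side lies in $\Z$, and the right-hand side is manifestly a rational integer. Since $p\in\p$ we have $\p\cap\Z=p\Z$, so a congruence between rational integers modulo $\p$ is the same as a congruence modulo $p$. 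Hence
$$\det\left[\binom{ki+kj}{ki}\right]_{1\le i,j\le m-1}\equiv (-1)^{m-1}\det J_q(k)\pmod p.$$

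Finally I would substitute the value of $\det J_q(k)$ from Theorem \ref{Thm. general Jq(k)}(ii) and simplify. Combining the last congruence with \eqref{Eq. congruence of det Jq(k)} yields the exponent $(m-1)+\frac{(k+1)(m^2-m)}{2}$, and the elementary identity
$$(m-1)+\frac{(k+1)(m^2-m)}{2}=\frac{(k+1)m^2-(k-1)m-2}{2}$$
rewrites it exactly as the exponent claimed in the statement.

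As for difficulty, there is no deep obstacle: the corollary is a formal consequence of Theorem \ref{Thm. general Jq(k)} together with \eqref{Eq. congruence involving Jacobi sums}. The only steps requiring care are (a) selecting the inverse character $\chi_{\p}^{-1}$ as the generator so that \eqref{Eq. congruence involving Jacobi sums} matches the superscripts $\chi^{ki},\chi^{kj}$ with the correct signs, (b) the descent from a congruence modulo the prime ideal $\p$ to one modulo the rational prime $p$, which hinges on the integrality provided by Theorem \ref{Thm. general Jq(k)}(i), and (c) the parity bookkeeping of the factor $(-1)^{m-1}$ against the exponent in \eqref{Eq. congruence of det Jq(k)}.
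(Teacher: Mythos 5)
Your proposal is correct and is exactly the derivation the paper intends: the paper states that the corollary ``follows Theorem \ref{Thm. general Jq(k)} and (\ref{Eq. congruence involving Jacobi sums}) directly,'' and your argument---choosing the generator $\chi_{\p}^{-1}$, applying (\ref{Eq. congruence involving Jacobi sums}) entrywise, extracting $(-1)^{m-1}$ from the rows, descending from $\p$ to $p$ via integrality, and combining with (\ref{Eq. congruence of det Jq(k)})---spells out precisely that deduction, with the exponent bookkeeping checking out.
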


Our next result concerns the explicit value of $\det J_q(1)$.

\begin{theorem}\label{Thm. Jq(1)}
	Let $q=p^n\ge3$ with $p$ prime and $n\in\mathbb{Z}^+$. Then 
	$$\det J_q(1)=(q-1)^{q-3}.$$
\end{theorem}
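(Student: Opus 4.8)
The plan is to factor the matrix $J_q(1)$ as a product of two explicit square matrices whose determinants are (generalized) Vandermonde determinants, and then to track the resulting signs. Write $S=\mathbb{F}_q\setminus\{0,1\}$, a set of exactly $q-2$ elements. Because of the convention $\psi(0)=0$, for $1\le i,j\le q-2$ the terms $x=0$ and $x=1$ contribute nothing, so
$$J_q(\chi^i,\chi^j)=\sum_{x\in\mathbb{F}_q}\chi^i(x)\chi^j(1-x)=\sum_{x\in S}\chi^i(x)\chi^j(1-x).$$
Hence, setting $V:=[\chi^i(x)]_{1\le i\le q-2,\ x\in S}$ and $W:=[\chi^j(1-x)]_{1\le j\le q-2,\ x\in S}$, both of size $(q-2)\times(q-2)$, we get $J_q(1)=VW^{\mathrm{T}}$. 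The map $\sigma\colon x\mapsto 1-x$ is an involution of $S$ and $W_{j,x}=V_{j,\sigma(x)}$, so $W=VP_\sigma$ for the permutation matrix $P_\sigma$. Therefore
$$\det J_q(1)=\det V\cdot\det W=\sgn(\sigma)\,(\det V)^2,$$
and the problem reduces to computing $(\det V)^2$ and the sign $\sgn(\sigma)$.

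For $\det V$ I would fix a generator $g$ of $\mathbb{F}_q^{\times}$ and put $\zeta=\chi(g)$, a primitive $(q-1)$-th root of unity since $\chi$ has order $q-1$. Writing $x=g^a$ identifies $V$ with the minor of the Vandermonde (DFT) matrix $[\zeta^{ia}]_{0\le i,a\le q-2}$ obtained by deleting the row $i=0$ and the column $a=0$. Factoring $\zeta^{a}$ out of each column and reindexing gives
$$\det V=\Big(\prod_{a=1}^{q-2}\zeta^{a}\Big)\prod_{1\le a<b\le q-2}(\zeta^{b}-\zeta^{a}),$$
a Vandermonde determinant in the nonunit $(q-1)$-th roots of unity $y_a=\zeta^{a}$. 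These $y_a$ are precisely the roots of $\Phi(z):=(z^{q-1}-1)/(z-1)=1+z+\cdots+z^{q-2}$, so squaring turns the Vandermonde factor into the discriminant of $\Phi$. Differentiating $(z-1)\Phi(z)=z^{q-1}-1$ yields $\Phi'(y_a)=(q-1)y_a^{q-2}/(y_a-1)$, and combining this with $\Phi(1)=q-1$ and $\prod_a y_a=(-1)^{q}$ gives $\prod_a\Phi'(y_a)=(q-1)^{q-3}$, whence
$$(\det V)^2=(-1)^{\frac{(q-2)(q-3)}{2}}(q-1)^{q-3}.$$

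Finally I would evaluate $\sgn(\sigma)$ by counting the fixed points of $x\mapsto 1-x$ on $S$. When $q$ is odd there is a single fixed point $x=1/2$, so $\sigma$ is a product of $(q-3)/2$ transpositions and $\sgn(\sigma)=(-1)^{(q-3)/2}$; when $q$ is even there are no fixed points and $\sgn(\sigma)=(-1)^{(q-2)/2}$. In either parity one checks that $\sgn(\sigma)\,(-1)^{(q-2)(q-3)/2}=1$, giving $\det J_q(1)=(q-1)^{q-3}$. I expect the sign bookkeeping in this last step---reconciling $\sgn(\sigma)$ with the discriminant sign across the even and odd cases for $q$---to be the main technical nuisance; the convention $\psi(0)=0$ must also be used carefully to guarantee that the factorization $J_q(1)=VW^{\mathrm{T}}$ is exact, i.e.\ that the boundary terms $x\in\{0,1\}$ genuinely drop out and $V,W$ are honestly $(q-2)\times(q-2)$ so that $\det J_q(1)=\det V\cdot\det W$.
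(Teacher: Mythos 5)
Your proposal is correct and follows essentially the same route as the paper: the same factorization $J_q(1)=VW^{\mathrm{T}}$ into character-value matrices (the paper's $M_q N_q$), the same sign lemma for the involution $x\mapsto 1-x$ on $\mathbb{F}_q\setminus\{0,1\}$, and the same Vandermonde-squared evaluation via logarithmic derivatives. The only cosmetic difference is that you compute the discriminant directly from $\Phi(z)=(z^{q-1}-1)/(z-1)$, whereas the paper works with $G(t)=t^{q-1}-1$ over all of $\mathbb{F}_q^{\times}$ and then divides out the column corresponding to $x=1$; the sign bookkeeping you flag as the main nuisance indeed works out exactly as you claim.
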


The next theorem determines the explicit value of $J_p(2)$.

\begin{theorem}\label{Thm. Jq(2)}
	Let $p\ge5$ be an odd prime. Then 
	$$ \det J_p(2)=\frac{1+(-1)^{\frac{p+1}{2}}p}{4}\left(\frac{p-1}{2}\right)^{\frac{p-5}{2}}.$$
\end{theorem}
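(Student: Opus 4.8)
The plan is to reduce the computation to a single principal minor of a circulant matrix by means of the Gauss-sum factorization of Jacobi sums. Write $\psi=\chi^2$, a character of order $m=(p-1)/2$, and let $g(\cdot)$ be the Gauss sum. For $1\le i,j\le m-1$ the characters $\psi^i,\psi^j$ are nontrivial and even, so $J_p(\psi^i,\psi^j)=g(\psi^i)g(\psi^j)/g(\psi^{i+j})$ whenever $i+j\ne m$, while on the anti-diagonal $i+j=m$ the sum degenerates to $J_p(\psi^i,\psi^{-i})=-\psi^i(-1)=-1$. Using $g(\psi^i)g(\psi^{-i})=\psi^i(-1)p=p$, both cases are captured uniformly by $J_p(\psi^i,\psi^j)=g(\psi^i)g(\psi^j)N_{i+j}$, where $N_s=1/g(\psi^s)$ for $s\not\equiv 0\pmod m$ and $N_m=-1/p$. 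Hence $J_p(2)=\mathrm{diag}(g(\psi^i))\,[N_{i+j}]\,\mathrm{diag}(g(\psi^j))$, and since $\left(\prod_{i=1}^{m-1}g(\psi^i)\right)^2=p^{m-1}$ (pair $i$ with $m-i$, each pair contributing $p$, and treat the quadratic character separately when $m$ is even, where $g(\eta)^2=\eta(-1)p=p$), I obtain $\det J_p(2)=p^{m-1}\det[N_{i+j}]_{1\le i,j\le m-1}$.

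Next I would convert this Hankel matrix into a circulant. Reversing the order of the columns (a permutation of sign $(-1)^{n(n-1)/2}$, with $n=m-1$) turns $[N_{i+j}]$ into a Toeplitz matrix, and a short computation using $g(\psi^{-s})=p/g(\psi^s)$ shows that this Toeplitz matrix equals $\frac1p B$ with $B_{ij}=g(\psi^{(j-i)\bmod m})$; crucially the new diagonal entry is exactly $-1=g(\varepsilon)$, so that $B$ is precisely the principal $(m-1)\times(m-1)$ submatrix of the $m\times m$ circulant $\mathcal C=[g(\psi^{(j-i)\bmod m})]_{1\le i,j\le m}$ obtained by deleting the last row and column. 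Tracking the powers of $p$, the factor $p^{m-1}$ cancels the $\frac1p$ from each of the $m-1$ rows, giving $\det J_p(2)=(-1)^{n(n-1)/2}\det B$, which reduces everything to one cofactor of $\mathcal C$.

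I would then diagonalize $\mathcal C$. Expanding $g(\psi^s)=\sum_{t}\psi^s(t)\zeta_p^t$ and summing the resulting geometric series in $s$, the eigenvalues come out as $\lambda_k=m(\zeta_p^{t_k}+\zeta_p^{-t_k})$, where $t_k$ ranges over representatives of $\mathbb F_p^\times/\{\pm1\}$; thus as a multiset $\{\lambda_k\}=\{m(\zeta_p^t+\zeta_p^{-t}):1\le t\le m\}$ and $\det\mathcal C=m^m\epsilon$ with $\epsilon=\prod_{t=1}^m(\zeta_p^t+\zeta_p^{-t})=\pm1$. Because deleting the last row and column produces a cofactor and $\mathcal C^{-1}$ is again circulant with constant diagonal $\frac1m\sum_k\lambda_k^{-1}$, I get $\det B=\det\mathcal C\cdot\frac1m\sum_k\lambda_k^{-1}=m^{m-2}\epsilon S$, where $S=\sum_{t=1}^m 1/(\zeta_p^t+\zeta_p^{-t})$. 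The sum $S$ I would evaluate by summing the partial fraction $\frac{z}{z^2+1}=\frac12\left(\frac{1}{z-\sqrt{-1}}+\frac{1}{z+\sqrt{-1}}\right)$ over the nontrivial $p$-th roots of unity and applying $\sum_{z^p=1}(z-a)^{-1}=-pa^{p-1}/(a^p-1)$ at $a=\pm\sqrt{-1}$; this yields $S=\frac{(-1)^{(p-1)/2}p-1}{4}=-\frac{1+(-1)^{(p+1)/2}p}{4}$.

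Finally I would assemble the signs. I expect the genuinely fiddly step to be the two sign computations: determining $\epsilon=(-1)^N$ with $N=\#\{t\le(p-1)/2:\cos(2\pi t/p)<0\}=(p-1)/2-\lfloor p/4\rfloor$, and then checking, separately for $p\equiv1$ and $p\equiv3\pmod4$, that $(-1)^{n(n-1)/2}\epsilon=-1$. Granting this, $\det J_p(2)=(-1)^{n(n-1)/2}\epsilon\,m^{m-2}S=-m^{m-2}S=\frac{1+(-1)^{(p+1)/2}p}{4}\left(\frac{p-1}{2}\right)^{(p-5)/2}$, as asserted. The main obstacle is the reduction to the circulant, and specifically the observation that after reversing the columns and clearing the factor $1/p$ the diagonal becomes exactly $g(\varepsilon)=-1$; once $\mathcal C$ is identified, the eigenvalue computation, the cofactor-via-adjugate formula, and the evaluation of $S$ are all routine, and only the sign bookkeeping requires care.
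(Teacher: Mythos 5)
Your proposal is correct, and it takes a genuinely different route from the paper's proof. The paper writes $J_p(2)=AB$ with $A=[\chi^i(k^2)]$ and $B=[\chi^j((1-k)^2)+\chi^j((1+k)^2)]$ rectangular, applies the Cauchy--Binet formula, evaluates each minor $\det A_{(k)}$ as a Vandermonde-type determinant, and then computes the one remaining determinant $\det\widetilde{B}$ by a long chain of row operations driven by the vanishing of the power sums $\sum_{i}\chi(i^2)^k$; the constants enter through the square of the Vandermonde product $S_p$. You instead use the Gauss-sum factorization of Jacobi sums to write $J_p(2)$ as diagonal $\times$ Hankel $\times$ diagonal, and your key observation --- that reversing the columns and clearing the factor $1/p$ yields exactly the leading principal block of the circulant $[g(\psi^{(j-i)\bmod m})]$, the degenerate anti-diagonal entries $-1$ becoming precisely $g(\varepsilon)$ --- reduces everything to a circulant cofactor. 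I checked the steps you left as short computations and they all hold: $N_{m+i-j}=g(\psi^{(j-i)\bmod m})/p$ for $i\ne j$; $\bigl(\prod_{i=1}^{m-1}g(\psi^i)\bigr)^2=p^{m-1}$ (in the $m$ even case one has $\eta(-1)=1$ because then $p\equiv1\pmod{4}$); the eigenvalue multiset $\{m(\zeta_p^t+\zeta_p^{-t}):1\le t\le m\}$; the evaluation $S=\bigl((-1)^{(p-1)/2}p-1\bigr)/4$; and the final sign identity $(-1)^{n(n-1)/2}\epsilon=-1$ with $n=m-1$, which does follow from the mod $4$ split you describe (for $p=4j+1$ the two exponents have parities $j-1$ and $j$; for $p=4j+3$ they have parities $j$ and $j+1$), and the assembled value agrees with the theorem at $p=5,7$. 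As for what each approach buys: the paper's argument is elementary and stays inside $\mathbb{Q}(\zeta_{p-1})$, never invoking Gauss sums; yours is structurally more transparent, since the factor $(1+(-1)^{(p+1)/2}p)/4$ appears as the closed form of the sum $S$ and the power $\left(\frac{p-1}{2}\right)^{(p-5)/2}$ comes from the circulant eigenvalues. More significantly, your method has a real prospect of extending to $\mathbb{F}_{p^l}$, where the paper's computation breaks down (see Remark \ref{Remark. Why we cannot generalize Thm. 3}): over $\mathbb{F}_q$ the same circulant has eigenvalues $m\bigl(\zeta_p^{\Tr(t)}+\zeta_p^{-\Tr(t)}\bigr)$, and the analogues of $S$ and $\epsilon$ reduce to counting elements of prescribed trace, which is standard; that extension would be worth pursuing.
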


\begin{remark}
	Unlike Theorem \ref{Thm. Jq(1)}, it is difficult to generalize this result to the finite field $\mathbb{F}_{p^n}$ with $n\ge2$. In Remark \ref{Remark. Why we cannot generalize Thm. 3}, we will provide a detailed explanation of the computational obstacles.
\end{remark}

\subsection{Outline of This Paper} We will prove our main results in Sections 2--4 respectively. In Section 5, we will pose some open problems for further research. 

\section{Proof of Theorem \ref{Thm. general Jq(k)}}

Throughout this section, we let $m=(q-1)/k$. For an integer $a$ with $\gcd(a,m)=1$, the map $x\mod m\mapsto ax\mod m$ is a permutation $\tau_m(a)$ of $\mathbb{Z}/m\mathbb{Z}$. We begin with the following result due to Lerch \cite{Lerch}.

\begin{lemma}\label{Lem. the Lerch permutation}
	Let $\sign({\tau_m(a)})$ denote the sign of $\tau_m(a)$. Then 
	\begin{equation*}
		\sign({\tau_m(a)})=
		\begin{cases}
			(\frac{a}{m})  & \mbox{if}\ m\equiv 1\pmod 2,\\
			1              & \mbox{if}\ m\equiv 2\pmod 4,\\
			(-1)^{(a-1)/2} & \mbox{if}\ m\equiv 0\pmod 4,
		\end{cases}
	\end{equation*}
	where $(\frac{\cdot}{m})$ is the Jacobi symbol if $m$ is odd. In particular,  
	\begin{equation*}
		\sign({\tau_{m}(-1)})=(-1)^{\frac{(m-1)(m-2)}{2}}.
	\end{equation*}
\end{lemma}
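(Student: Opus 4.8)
The plan is to exploit that $\sign\circ\tau_m$ is a group homomorphism and to reduce everything to prime-power moduli via the Chinese Remainder Theorem. First I would note that $\tau_m\colon(\mathbb{Z}/m\mathbb{Z})^\times\to S_m$ is a homomorphism, so post-composing with the sign map yields a homomorphism $a\mapsto\sign(\tau_m(a))$ into $\{\pm1\}$; since the Jacobi symbol $(\frac{\cdot}{m})$ and the function $a\mapsto(-1)^{(a-1)/2}$ are likewise homomorphisms on $(\mathbb{Z}/m\mathbb{Z})^\times$, it will suffice to verify each claimed identity on a generating set. Next, writing $m=m_1m_2$ with $\gcd(m_1,m_2)=1$ and using the CRT isomorphism $\mathbb{Z}/m\mathbb{Z}\cong\mathbb{Z}/m_1\mathbb{Z}\times\mathbb{Z}/m_2\mathbb{Z}$, the permutation $\tau_m(a)$ becomes the product permutation $\tau_{m_1}(a)\times\tau_{m_2}(a)$, whose sign equals $\sign(\tau_{m_1}(a))^{m_2}\sign(\tau_{m_2}(a))^{m_1}$. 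Thus $\sign\circ\tau_m$ is multiplicative in $m$ up to these exponents, and it remains to treat prime powers.

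For an odd prime power $m=p^e$, I would fix a primitive root $g$ modulo $p^e$ and read off the cycle type of $x\mapsto gx$ directly: besides the fixed point $0$, the elements of $p$-adic valuation exactly $v$ (for $0\le v\le e-1$) form a single cycle, of length $\varphi(p^{e-v})=p^{e-v-1}(p-1)$, because $g$ remains a primitive root modulo $p^{e-v}$. Each such length is even since $p$ is odd, so each of these $e$ cycles contributes $-1$, giving $\sign(\tau_{p^e}(g))=(-1)^e=(\frac{g}{p^e})$; as both sides are homomorphisms agreeing on the generator $g$, the odd-$m$ case follows after collecting prime powers (the odd cofactors appearing as CRT exponents are harmless, being odd).

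For even $m=2^em'$ with $m'$ odd, the product formula gives $\sign(\tau_m(a))=\sign(\tau_{2^e}(a))^{m'}\sign(\tau_{m'}(a))^{2^e}=\sign(\tau_{2^e}(a))$, since the odd part is raised to an even power and the $2$-part to an odd power. When $e=1$ (so $m\equiv2\pmod4$) the map $\tau_2(a)$ is the identity and the sign is $1$. When $e\ge2$ (so $m\equiv0\pmod4$) I would argue on generators: for $e=2$ the group $\langle 3\rangle$ yields $\tau_4(3)=(1\,3)$ with sign $-1=(-1)^{(3-1)/2}$, while for $e\ge3$ one has $(\mathbb{Z}/2^e\mathbb{Z})^\times=\langle-1\rangle\times\langle5\rangle$, and a cycle-structure computation gives $\sign(\tau_{2^e}(-1))=-1$ and $\sign(\tau_{2^e}(5))=+1$, matching the character $a\mapsto(-1)^{(a-1)/2}$ on the generators. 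Finally, the displayed special value $\sign(\tau_m(-1))=(-1)^{(m-1)(m-2)/2}$ follows either by specializing $a=-1$ above, or directly from the cycle structure of $x\mapsto-x$, which has one fixed point for odd $m$ and two (namely $0$ and $m/2$) for even $m$, giving sign $(-1)^{(m-1)/2}$ and $(-1)^{(m-2)/2}$ respectively; a short parity check shows both equal $(-1)^{(m-1)(m-2)/2}$.

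I expect the main obstacle to be the case $m\equiv0\pmod4$, and specifically the evaluation of $\sign(\tau_{2^e}(5))$ for $e\ge3$: because $(\mathbb{Z}/2^e\mathbb{Z})^\times$ is not cyclic, one cannot simply read the answer off a single primitive-root cycle as in the odd case, and the orbits of multiplication by $5$ must be sorted by $2$-adic valuation, with the orbit lengths $\ord_{2^{e-v}}(5)$ tracked carefully so as to count the even-length cycles and confirm their number is even. Everything else reduces to routine bookkeeping with the homomorphism property and the CRT decomposition.
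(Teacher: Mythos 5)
Your proposal is correct, but note that the paper contains no proof of this lemma at all: it is quoted as a known result of Lerch, with a citation to his 1896 note on Zolotarev's theorem, and is then used as a black box. So your self-contained argument is necessarily a ``different route,'' and it is in fact the standard Zolotarev--Frobenius--Lerch proof: reduce to prime powers via the CRT conjugation, using $\sign(\sigma\times\tau)=\sign(\sigma)^{m_2}\sign(\tau)^{m_1}$, then check on generators. The points you flag as delicate do work out. For odd $p^e$, a primitive root $g$ modulo $p^e$ stays primitive modulo $p^{e-v}$, so the valuation-$v$ stratum is a single cycle of even length $\varphi(p^{e-v})$, giving $\sign(\tau_{p^e}(g))=(-1)^e=(\frac{g}{p^e})$; and the CRT exponents $m/p_i^{e_i}$ are odd, so the Jacobi symbol assembles correctly. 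For $e\ge3$, multiplication by $5$ on the valuation-$v$ stratum of $\mathbb{Z}/2^e\mathbb{Z}$ splits into exactly two cycles of length $2^{e-v-2}$ (even precisely when $v\le e-3$), while the strata with $v\ge e-2$ are fixed points, so $\sign(\tau_{2^e}(5))=+1$; together with $\sign(\tau_{2^e}(-1))=-1$ this matches the character $a\mapsto(-1)^{(a-1)/2}$ on the generators $-1$ and $5$ of $(\mathbb{Z}/2^e\mathbb{Z})^\times$. Your direct fixed-point count for $x\mapsto-x$ (one fixed point for odd $m$, two for even $m$) also gives the stated value $(-1)^{(m-1)(m-2)/2}$ after the parity check. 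What your approach buys is that the paper would become self-contained at the cost of about a page; what the paper's approach buys is brevity, since only the multiplicativity of $\sign\circ\tau_m$ and the special value at $a=-1$ are actually used in the sequel.
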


We also need the well-known Lucas congruence.

\begin{lemma}\label{Lem. Lucas congruence}
	Let $a,b,c,d\in\mathbb{N}=\{0,1,2,\cdots\}$ with $0\le b,d\le p-1$. Then 
	$$\binom{ap+b}{cp+d}\equiv \binom{a}{c}\binom{b}{d}\pmod p.$$
\end{lemma}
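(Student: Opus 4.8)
The plan is to prove this congruence by the classical generating-function argument in the polynomial ring $\mathbb{F}_p[x]$, where the uniqueness of base-$p$ digit expansions does all the real work. The starting point is the ``freshman's dream'': since $p\mid\binom{p}{k}$ for every $0<k<p$, the binomial theorem gives the identity
$$(1+x)^p\equiv 1+x^p\pmod p$$
in $\mathbb{F}_p[x]$. This is the only arithmetic input about $p$ that the proof needs.

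First I would raise this identity to the $a$-th power and multiply by $(1+x)^b$ to obtain
$$(1+x)^{ap+b}\equiv\left(1+x^p\right)^a(1+x)^b\pmod p.$$
Next I would expand both sides via the binomial theorem. The left-hand side is $\sum_{N}\binom{ap+b}{N}x^N$, while the right-hand side factors as
$$\left(\sum_{i}\binom{a}{i}x^{pi}\right)\left(\sum_{j}\binom{b}{j}x^j\right).$$
Finally I would extract and compare the coefficient of $x^{cp+d}$ on each side. On the right, a monomial $x^{pi+j}$ contributes to $x^{cp+d}$ exactly when $pi+j=cp+d$; since $0\le j\le b\le p-1$ and $0\le d\le p-1$, the uniqueness of the quotient-and-remainder of $cp+d$ upon division by $p$ forces $i=c$ and $j=d$. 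Hence the right-hand coefficient is precisely $\binom{a}{c}\binom{b}{d}$, and matching it with the left-hand coefficient $\binom{ap+b}{cp+d}$ yields the claim. (The edge cases $c>a$ or $d>b$ are automatically consistent, as then the relevant term is absent on the right while one of $\binom{a}{c},\binom{b}{d}$ vanishes.)

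As for the difficulty: this is the standard proof of Lucas's theorem, so there is no genuine obstacle. The single point demanding care is the coefficient extraction, where the hypothesis $0\le d\le p-1$ (together with $0\le j\le p-1$) is essential: it guarantees that $(i,j)=(c,d)$ is the \emph{unique} solution of $pi+j=cp+d$ in the relevant range. Without this digit restriction the two factors would mix across several powers of $p$ and the clean product formula would break down.
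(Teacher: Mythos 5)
Your proof is correct and complete. Note that the paper itself gives no proof of this lemma at all --- it simply states it as ``the well-known Lucas congruence'' --- so there is no in-paper argument to compare against. Your generating-function derivation, based on the identity $(1+x)^p\equiv 1+x^p \pmod p$ in $\mathbb{F}_p[x]$ and extraction of the coefficient of $x^{cp+d}$ via uniqueness of quotient and remainder (which is exactly where the hypothesis $0\le b,d\le p-1$ enters), is the standard proof of Lucas's theorem, and your remark that the cases $c>a$ or $d>b$ are handled automatically (both sides reduce to $0$ modulo $p$) closes the only potential gap.
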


On the other hand, we turn to the following matrix involving the binomial coefficients:
\begin{equation*}
	C_n:=\left[\binom{i+j}{i}\right]_{1\le i,j\le n}=\left[\frac{\Gamma(i+j+1)}{\Gamma(i+1)\Gamma(j+1)}\right]_{1\le i,j\le n}.
\end{equation*}
One can easily verify that (we will give a detailed proof in the appendix)
\begin{equation}\label{Eq. det Cn}
	\det C_n=n+1.
\end{equation}

Let $\zeta_{q-1}=e^{2\pi {\bf i}/(q-1)}$ and let $\zeta_p=e^{2\pi {\bf i}/p}$. Consider the cyclotomic field $L=\mathbb{Q}(\zeta_{q-1},\zeta_p)$ and let $\O_L$ be the ring of all algebraic integers over $L$. Let $\p$ be a prime ideal of $\O_L$ with $p\in\p$. Then it is easy to verify that 
$$\O_L/\p\cong\mathbb{F}_q.$$
Let $\chi_{\p}$ be the Teich\"{u}muller character of $\p$, i.e., 
\begin{equation}\label{Eq. definition of the T character}
	\chi_{\p}(x \mod \p)\equiv x\pmod {\p}
\end{equation}
for any $x\in\O_L$. Then for any integer $1\le i,j\le q-2$, the following congruence holds (see \cite[Proposition 3.6.4]{Cohen}).
\begin{equation}\label{Eq. congruence involving Jacobi sums}
	J_q(\chi_{\p}^{-i},\chi_{\p}^{-j})\equiv -\binom{i+j}{i}\equiv -\frac{\Gamma(i+j+1)}{\Gamma(i+1)\Gamma(j+1)}\pmod{\mathfrak{p}}.
\end{equation}
Congruence (\ref{Eq. congruence involving Jacobi sums}) establishes a connection between binomial coefficients and Jacobi sums, and hence (\ref{Eq. finite field analogue}) can be also viewed as a finite field analogue of (\ref{Eq. det Cn}).

Now we are in a position to prove our first result.

{\noindent{\bf Proof of Theorem \ref{Thm. general Jq(k)}.}} (i) It is known that 
$$\Gal\left(\mathbb{Q}(\zeta_{q-1})/\mathbb{Q}\right)\cong\left(\mathbb{Z}/(q-1)\mathbb{Z}\right)^{\times}.$$ 
Hence 
$$\Gal\left(\mathbb{Q}(\zeta_{q-1})/\mathbb{Q}\right)
=\left\{\sigma_r: r\in\mathbb{Z}\ \text{and}\ \gcd(r,q-1)=1\right\},$$
where $\sigma_r(\zeta_{q-1})=\zeta_{q-1}^r$. Now for any $\sigma_r\in\Gal\left(\mathbb{Q}(\zeta_{q-1})/\mathbb{Q}\right)$, by Lemma \ref{Lem. the Lerch permutation} we have 
\begin{align*}
	\sigma_r\left(\det J_{\chi,q}(k)\right)
	&=\det\left[J_q(\chi^{rki},\chi^{rkj})\right]_{1\le i,j\le m-1}\\
	&=\sign(\tau_m(r))^2 \cdot \det J_{\chi,q}(k)\\
	&=\det J_{\chi,q}(k).
\end{align*}
By the Galois theory and noting that $\det J_{\chi,q}(k)$ is an algebraic integer, we see that $\det J_{\chi,q}(k)\in\mathbb{Z}$. Also, for any integer $r$, the character $\chi^r$ is a generator of $\widehat{\mathbb{F}_q^{\times}}$ if and only if $\gcd(r,q-1)=1$. Thus, $\det J_{\chi,q}(k)$ is independent of the choice of the generator $\chi$ is equivalent to $\det J_{\chi,q}(k)\in\mathbb{Z}$. By the above, 
(i) clearly holds.

(ii) Now we adopt the notations of (\ref{Eq. congruence involving Jacobi sums}). By (i) we see that $\det J_{\chi,q}(k)$ is independent of the choice of the generator $\chi$. Hence, in the remaining part of the proof, we let $\chi=\chi_{\p}$ be the Teich\"{u}muller character of $\p$ (note that $\chi_{\p}$ is a generator of $\widehat{\mathbb{F}_q^{\times}}$). 

By Lemma \ref{Lem. the Lerch permutation} we first have 
\begin{equation}\label{Eq. transformation of Jq(k) in the proof of Thm. 1}
	\det J_{\chi,q}(k)= (-1)^{\frac{(m-1)(m-2)}{2}}\det\left[J_q(\chi_{\p}^{ki},\chi_{\p}^{-kj})\right]_{1\le i,j\le m-1}.
\end{equation}
For any $1\le i<j\le m-1$, by (\ref{Eq. congruence involving Jacobi sums}) and Lemma \ref{Lem. Lucas congruence} we have 
\begin{align*}
	J_q(\chi_{\p}^{ki},\chi_{\p}^{-kj})
	&=J_q(\chi_{\p}^{-(q-1-ki)},\chi_{\p}^{-kj})\\
	&\equiv -\binom{q-1+kj-ki}{kj}\\
	&\equiv -\binom{kj-ki-1}{kj}\\
	&\equiv 0\pmod {\p}.
\end{align*}
Combining this with (\ref{Eq. transformation of Jq(k) in the proof of Thm. 1}) and noting that $J_q(\chi_{\p}^{ki},\chi_{\p}^{-ki})=(-1)^{ki+1}$ for $1\le i\le q-2$, 
we obtain 
\begin{align*}
	\det J_{\chi,q}(k)
	&\equiv (-1)^{\frac{(m-1)(m-2)}{2}}\det\left[J_q(\chi_{\p}^{ki},\chi_{\p}^{-kj})\right]_{1\le i,j\le m-1}\\
	&\equiv (-1)^{\frac{(m-1)(m-2)}{2}}\prod_{1\le i\le m-1}J_q\left(\chi_{\p}^{ki},\chi_{\p}^{-ki}\right)\\
	&\equiv (-1)^{\frac{(m-1)(m-2)}{2}}\prod_{1\le i\le m-1}(-1)^{ki+1}\\
	&\equiv (-1)^{\frac{(k+1)m^2-(k+1)m}{2}}\pmod{\p}.
\end{align*}
Since $\det J_{\chi,q}(k)\in\mathbb{Z}$, we finally obtain 
$$\det J_{\chi,q}(k)\equiv (-1)^{\frac{(k+1)m^2-(k+1)m}{2}} \pmod p.$$

In view of the above, we have completed the proof of Theorem \ref{Thm. general Jq(k)}.\qed

\section{Proof of Theorem \ref{Thm. Jq(1)}}

In this section, we set $\mathbb{F}_q=\{a_0,a_1,a_2,\cdots,a_{q-1}\}$ with $a_0=0$ and $a_1=1$. Also, we define 
\begin{equation}\label{Eq. definition of Delta q}
	\Delta_q=\prod_{1\le i<j\le q-1}\left(\chi(a_j)-\chi(a_i)\right). 
\end{equation}
We begin with the following result.

\begin{lemma}\label{Lem. explicit value of Delta q}
	Let notations be as above. Then 
	$$\Delta_q^2=(-1)^{\frac{(q-1)(q-2)+2q}{2}}(q-1)^{q-1}.$$
\end{lemma}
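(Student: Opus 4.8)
The plan is to recognize $\Delta_q^2$ as the discriminant of $x^{q-1}-1$ and then evaluate that discriminant by the standard resultant formula.

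First I would observe that, since $\chi$ generates $\widehat{\mathbb{F}_q^{\times}}$, it is a faithful character of the cyclic group $\mathbb{F}_q^{\times}$ of order $q-1$; hence the map $a\mapsto\chi(a)$ is a bijection from $\mathbb{F}_q^{\times}=\{a_1,\ldots,a_{q-1}\}$ onto the group $\mu_{q-1}$ of all $(q-1)$-th roots of unity. Consequently the values $\chi(a_1),\ldots,\chi(a_{q-1})$ are precisely the roots of $f(x):=x^{q-1}-1$, each occurring exactly once. Since
$$\Delta_q^2=\prod_{1\le i<j\le q-1}\left(\chi(a_j)-\chi(a_i)\right)^2$$
is symmetric in these values (so the particular labelling $a_1,\ldots,a_{q-1}$ is irrelevant), we have $\Delta_q^2=\operatorname{disc}(f)$.

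Next I would compute this discriminant. Writing $N=q-1$ and letting $\alpha_1,\ldots,\alpha_N$ denote the roots of $f$, the standard identity for a monic polynomial,
$$\operatorname{disc}(f)=(-1)^{\frac{N(N-1)}{2}}\prod_{i=1}^{N}f'(\alpha_i),$$
applies with $f'(x)=Nx^{N-1}$, so that $\prod_{i=1}^{N}f'(\alpha_i)=N^N\big(\prod_{i=1}^{N}\alpha_i\big)^{N-1}$. From the constant term of $x^N-1$ one reads off $\prod_{i=1}^{N}\alpha_i=(-1)^{N+1}$, whence $\big(\prod_{i}\alpha_i\big)^{N-1}=(-1)^{(N+1)(N-1)}=(-1)^{N-1}$. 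Combining these gives
$$\Delta_q^2=(-1)^{\frac{N(N-1)}{2}+N-1}\,N^N.$$

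Finally I would substitute $N=q-1$ and reconcile the sign: a direct check shows that the exponent $\frac{N(N-1)}{2}+N-1=\frac{(q-1)(q-2)}{2}+q-2$ differs from the claimed exponent $\frac{(q-1)(q-2)+2q}{2}=\frac{(q-1)(q-2)}{2}+q$ by exactly $2$, so the two signs coincide, and since $N^N=(q-1)^{q-1}$ the stated formula follows. The evaluation itself is entirely routine; the only place demanding care is the parity bookkeeping in the exponent of $-1$, where one must verify the congruence of the two exponents modulo $2$ rather than their literal equality. The one genuinely conceptual step is the opening identification of the $\chi(a_i)$ with the full set of $(q-1)$-th roots of unity, which is exactly what forces $\Delta_q^2$ to be the discriminant of $x^{q-1}-1$ and makes the whole computation possible.
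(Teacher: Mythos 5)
Your proposal is correct and takes essentially the same route as the paper: the paper likewise identifies the values $\chi(a_j)$ with the full set of $(q-1)$-th roots of unity, writes $\Delta_q^2=(-1)^{\frac{(q-1)(q-2)}{2}}\prod_{j}G'(\chi(a_j))$ with $G(t)=t^{q-1}-1$, and evaluates the product using $\prod_j\chi(a_j)=(-1)^q$ --- which is exactly the discriminant computation you cite as a standard formula, carried out inline. The only cosmetic difference is your use of the named identity $\operatorname{disc}(f)=(-1)^{N(N-1)/2}\prod_i f'(\alpha_i)$ and a final parity check, versus the paper's direct manipulation via $\chi(a_j)^{q-2}=\chi(a_j)^{-1}$.
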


\begin{proof}
	As $\chi$ is a generator of $\widehat{\mathbb{F}_q^{\times}}$, we have 
	\begin{equation}\label{Eq. definition of G(t)}
		G(t):=t^{q-1}-1=\prod_{1\le j\le q-1}\left(t-\chi(a_j)\right).
	\end{equation}
	By (\ref{Eq. definition of G(t)}) we have 
	\begin{equation}\label{Eq. product of all aj}
		\prod_{1\le j\le q-1}\chi(a_j)=(-1)^q.
	\end{equation}
	Let $G'(t)=(q-1)t^{q-2}$ be the derivative of $G(t)$. Then by (\ref{Eq. definition of G(t)}) and (\ref{Eq. product of all aj}) 
	\begin{align*}
		\Delta_q^2
		&=(-1)^{\frac{(q-1)(q-2)}{2}}\prod_{1\le i\neq j\le q-1}\left(\chi(a_j)-\chi(a_i)\right)\\
		&=(-1)^{\frac{(q-1)(q-2)}{2}}\prod_{1\le j\le q-1}\prod_{i\neq j}\left(\chi(a_j)-\chi(a_i)\right)\\
		&=(-1)^{\frac{(q-1)(q-2)}{2}}\prod_{1\le j\le q-1}G'\left(\chi(a_j)\right)\\
		&=(-1)^{\frac{(q-1)(q-2)}{2}}(q-1)^{q-1}\prod_{1\le j\le q-1}\chi(a_j)^{-1}\\
		&=(-1)^{\frac{(q-1)(q-2)+2q}{2}}(q-1)^{q-1}.
	\end{align*}
	This completes the proof.
\end{proof}

We also need the following result.

\begin{lemma}\label{Lem. definition of Dy}
	Let notations be as above. Then for any $y\in\mathbb{F}_q^{\times}$ we have 
	$$D_y:=\prod_{x\in\mathbb{F}_q^{\times}\setminus\{y\}}\left(\chi(y)-\chi(x)\right)=(q-1)\chi(y)^{-1}.$$
\end{lemma}

\begin{proof}
	By (\ref{Eq. definition of G(t)}) we have 
	$$D_y=G'\left(\chi(y)\right)=(q-1)\chi(y)^{-1}.$$
	This completes the proof. 
\end{proof}

The next lemma concerns the sign of a permutation of $a_2,a_3,\cdots,a_{q-1}$.

\begin{lemma}\label{Lem. permutations induced by -x+1}
	Let $\pi_q$ be the permutation of the sequence $a_2,a_3,\cdots,a_{q-1}$ induced by the map $x\mapsto -x+1$. Then the sign of $\pi_q$ is equal to 
	$$\sign(\pi_q)=(-1)^{\frac{(q-2)(q-3)}{2}}.$$
\end{lemma}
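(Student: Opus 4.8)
The plan is to recognize $\pi_q$ as the restriction of an involution and then read off its sign from its cycle structure, since the sign of an involution is determined entirely by its number of transpositions and is independent of the order in which we list $a_2, a_3, \dots, a_{q-1}$.

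First I would observe that the map $\iota\colon x \mapsto 1-x$ is an involution of $\F_q$, since $\iota(\iota(x)) = 1-(1-x) = x$. Because $\iota(0)=1$ and $\iota(1)=0$, the map $\iota$ permutes $\{0,1\}=\{a_0,a_1\}$ among themselves, so its restriction $\pi_q$ to the complement $S := \F_q \sm \{a_0,a_1\} = \{a_2,\dots,a_{q-1}\}$ is a well-defined involution of the $(q-2)$-element set $S$. Next I would recall the general fact that an involution with $f$ fixed points on a set of size $N$ has exactly $t=(N-f)/2$ transpositions and sign $(-1)^t$, a quantity that does not depend on any chosen ordering of the set. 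Thus it suffices to count the elements of $S$ fixed by $\iota$, i.e.\ the solutions in $S$ of $1-x=x$, equivalently $2x=1$.

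Then I would split into two cases according to the characteristic. If $p$ is odd (so $q$ is odd), the equation $2x=1$ has the unique solution $x=2^{-1}$, which lies in $S$ because $2^{-1}\ne 0,1$; hence $f=1$ and $t=(q-3)/2$. If $p=2$ (so $q$ is even), the equation $2x=1$ becomes $0=1$ and has no solution, so $f=0$ and $t=(q-2)/2$. The final step is a short parity check: using that $q-2$ is odd when $q$ is odd and that $q-3$ is odd when $q$ is even, one verifies in both cases that $(-1)^t=(-1)^{(q-2)(q-3)/2}$, which is exactly the asserted value of $\sign(\pi_q)$.

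The argument is entirely elementary, so there is no serious analytic or algebraic obstacle. The only point requiring care is the case distinction on the parity of $q$ (equivalently, whether $\iota$ admits a fixed point in $S$), together with confirming that the two resulting exponents $(q-3)/2$ and $(q-2)/2$ both reduce to $(q-2)(q-3)/2$ modulo $2$.
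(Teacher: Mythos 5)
Your proposal is correct, and it diverges from the paper's argument in an instructive way. The paper also splits into the two characteristics, and in the even case argues exactly as you do: $\pi_q$ is a fixed-point-free involution on $q-2$ elements, hence a product of $(q-2)/2$ transpositions. But in the odd case the paper takes a different route: it evaluates the sign via the product formula
$$\sign(\pi_q)=\prod_{2\le i<j\le q-1}\frac{(1-a_j)-(1-a_i)}{a_j-a_i}=(-1)^{\frac{(q-2)(q-3)}{2}},$$
where each ratio equals $-1$, so the exponent $\binom{q-2}{2}$ appears immediately with no parity bookkeeping; note this formula is only meaningful when $1\ne-1$ in $\mathbb{F}_q$, which is precisely why the paper isolates characteristic $2$. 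Your approach instead treats both cases uniformly by the cycle-structure of the involution $x\mapsto 1-x$: you count fixed points (the single point $2^{-1}$ when $p$ is odd, none when $p=2$), get $(q-3)/2$ resp.\ $(q-2)/2$ transpositions, and then check that both exponents agree with $(q-2)(q-3)/2$ modulo $2$. What the paper's ratio trick buys is that the stated exponent falls out directly; what your argument buys is conceptual uniformity (one mechanism, fixed-point counting, in both characteristics) at the cost of a short final parity verification, which you carry out correctly.
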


\begin{proof}
	We first consider the case $2\mid q$. In this case, clearly $\pi_q$ can be written as a product of $(q-2)/2$ transpositions. Hence we have 
	$$\sign(\pi_q)=(-1)^{\frac{q-2}{2}}=(-1)^{\frac{(q-2)(q-3)}{2}}.$$
	
	Suppose now $2\nmid q$. Then $\{1,-1\}$ can be viewed as a subset of $\mathbb{F}_q$. Hence 
	$$\sign(\pi_q)=\prod_{2\le i<j\le q-1}\frac{(1-a_j)-(1-a_i)}{a_j-a_i}=(-1)^{\frac{(q-2)(q-3)}{2}}.$$
	This completes the proof.
\end{proof}

Now we are in a position to prove our second theorem.

{\noindent{\bf Proof of Theorem \ref{Thm. Jq(1)}.}} For any $1\le i,j\le q-2$, we clearly have 
$$J_q(\chi^i,\chi^j)=\sum_{2\le k\le q-1}\chi^i(a_k)\chi^j(1-a_k).$$
By this we obtain the matrix decomposition 
\begin{equation}\label{Eq. matrix decomposition of Jq(1)}
	J_q(1)=M_qN_q,
\end{equation}
where $M_q$ is a $(q-2)\times(q-2)$ matrix defined by 
\begin{equation*}
	M_q:=\left[\chi^i(a_k)\right]_{1\le i\le q-2,\ 2\le k\le q-1},
\end{equation*}
and $N_q$ is also a $(q-2)\times(q-2)$ matrix defined by 
\begin{equation*}
	N_q:=\left[\chi^j(1-a_k)\right]_{2\le k\le q-1,\ 1\le j\le q-2}.
\end{equation*}

We first consider $\det M_q$. Recall that $\Delta_q$ and $D_y$ are defined by (\ref{Eq. definition of Delta q}) and Lemma \ref{Lem. definition of Dy} respectively. By definition and (\ref{Eq. product of all aj}) we have 
\begin{align}
	\det M_q 
	&=\left|
	\begin{array}{cccc}
		\chi^1 (a_2)& \chi^1 (a_3)&\cdots  & \chi^1 (a_{q-1}) \\
		\chi^2 (a_2)& \chi^2 (a_3)&\cdots  & \chi^2 (a_{q-1}) \\
		\vdots  & \vdots  & \ddots &\vdots \\
		\chi^{q-2} (a_2)& \chi^{q-2} (a_3)&\cdots  & \chi^{q-2} (a_{q-1})  \\
	\end{array}
	\right| \notag \\
	&=\prod_{2\le j\le q-1}\chi(a_j)\prod_{2\le i<j\le q-1}\left(\chi(a_j)-\chi(a_i)\right) \notag \\
	&=\frac{(-1)^q\Delta_q}{D_1} \notag\\
	&=\frac{(-1)^q\Delta_q}{q-1}. \label{Eq. explicit value of det Mq}
\end{align}

We now turn to $\det N_q$. By Lemma \ref{Lem. permutations induced by -x+1} we have 
\begin{align*}
	\det N_q
	&=\left|
	\begin{array}{cccc}
		\chi^1 (1-a_2)& \chi^2 (1-a_2)&\cdots  & \chi^{q-2} (1-a_2) \\
		\chi^1 (1-a_3)& \chi^2 (1-a_3)&\cdots  & \chi^{q-2} (1-a_3) \\
		\vdots  & \vdots  & \ddots &\vdots \\
		\chi^1 (1-a_{q-1})& \chi^2 (1-a_{q-1})&\cdots  & \chi^{q-2} (1-a_{q-1})  \\
	\end{array}
	\right|\\
	&=\sign(\pi_q)\cdot 
	\left|
	\begin{array}{cccc}
		\chi^1 (a_2)& \chi^2 (a_2)&\cdots  & \chi^{q-2} (a_2) \\
		\chi^1 (a_3)& \chi^2 (a_3)&\cdots  & \chi^{q-2} (a_3) \\
		\vdots  & \vdots  & \ddots &\vdots \\
		\chi^1 (a_{q-1})& \chi^2 (a_{q-1})&\cdots  & \chi^{q-2} (a_{q-1})  \\
	\end{array}
	\right|.
\end{align*}
Now similar to the calculation of $\det M_q$, one can verify that 
\begin{equation}\label{Eq. explicit value of det Nq}
	\det N_q=(-1)^{\frac{(q-2)(q-3)+2q}{2}}\Delta_q/(q-1).
\end{equation}

Combining (\ref{Eq. explicit value of det Mq}) with (\ref{Eq. explicit value of det Nq}) and by Lemma \ref{Lem. explicit value of Delta q}, we obtain 
$$\det J_q(1)=\det M_q \cdot \det N_q=(q-1)^{q-3}.$$

In view of the above, we have completed the proof of Theorem \ref{Thm. Jq(1)}.\qed 

\section{Proof of Theorem \ref{Thm. Jq(2)}}

Let $M$ be an $r\times n$ complex matrix with $r\le n$ and let $N$ be an $n\times r$ complex matrix. Set
$$\mathcal{S}_r=\{\s=(j_1,j_2,\cdots,j_r):\ 1\le j_1<j_2<\cdots<j_r\le n\}.$$
For any $\s\in \mathcal{S}_r$, we define $M_{\s}$ (respectively $N^{\s}$) to be the $r\times r$ submatrix of $M$ (respectively submatrix of $N$) obtained by deleting all columns (respectively all rows) except those with indices in $\s$. We begin with the well-known Cauchy-Binet formula.

\begin{lemma}\label{Lem. Cauchy-Binet formula} Let $M,N$ be two complex matrices of sizes $r\times n$ and $n\times r$ respectively with $r\le n$. Then
	$$\det(MN)=\sum_{\s\in \mathcal{S}_r}\det(M_{\s})\det(N^{\s}).$$
\end{lemma}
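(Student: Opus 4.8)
The plan is to prove the Cauchy--Binet formula by a direct expansion of $\det(MN)$ through the Leibniz formula, distributing the matrix product inside the determinant and then regrouping the resulting terms according to which $r$-element index set they involve. The whole argument is formal manipulation of finite sums, so the only real content is a careful sign-tracking step when the terms are reorganized.

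First I would write, using the Leibniz expansion over the symmetric group $S_r$ on $\{1,\dots,r\}$ together with $(MN)_{i,k}=\sum_{\ell=1}^{n}M_{i\ell}N_{\ell k}$,
\begin{equation*}
\det(MN)=\sum_{\sigma\in S_r}\sgn(\sigma)\prod_{i=1}^{r}\sum_{\ell_i=1}^{n}M_{i\ell_i}N_{\ell_i\sigma(i)}.
\end{equation*}
Expanding the product over $i$ turns this into a sum over all functions $\ell\colon\{1,\dots,r\}\to\{1,\dots,n\}$, and after interchanging the order of summation the $\sigma$-sum for a fixed $\ell$ collapses into a single $r\times r$ determinant:
\begin{equation*}
\det(MN)=\sum_{\ell}\Big(\prod_{i=1}^{r}M_{i\ell_i}\Big)\det\big(N_{\ell_i,k}\big)_{1\le i,k\le r}.
\end{equation*}

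Next I would observe that the inner determinant is that of the matrix whose $i$-th row is the $\ell_i$-th row of $N$, so it vanishes whenever $\ell$ fails to be injective, since then two rows coincide. Hence only injective $\ell$ survive. Each injective $\ell$ has a well-defined image set $\s=\{j_1<\dots<j_r\}\in\mathcal{S}_r$, and writing $\ell_i=j_{\rho(i)}$ defines a unique $\rho\in S_r$; permuting rows by $\rho$ gives $\det(N_{\ell_i,k})_{i,k}=\sgn(\rho)\det(N^{\s})$. Grouping the injective functions by their image $\s$ therefore yields
\begin{equation*}
\det(MN)=\sum_{\s\in\mathcal{S}_r}\det(N^{\s})\sum_{\rho\in S_r}\sgn(\rho)\prod_{i=1}^{r}M_{i,j_{\rho(i)}},
\end{equation*}
and the inner $\rho$-sum is exactly the Leibniz expansion of $\det(M_{\s})$, which gives the claimed identity.

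The step I expect to be the crux is the reindexing in the last display: correctly matching each injective index function $\ell$ with the pair $(\s,\rho)$ and verifying that the sign $\sgn(\rho)$ produced by reordering the rows of $N$ is precisely the one needed for the $M$-factors to reassemble into $\det(M_{\s})$. As an alternative I could embed the problem in the $(n+r)\times(n+r)$ block matrix $\left(\begin{smallmatrix} I_n & -N\\ M & 0\end{smallmatrix}\right)$ and evaluate its determinant two ways --- once via the Schur complement of the invertible top-left block to obtain $\det(MN)$, and once via a generalized Laplace expansion along the bottom $r$ rows to obtain $\sum_{\s}\det(M_{\s})\det(N^{\s})$ --- but that route merely trades the reindexing bookkeeping for an equally delicate accounting of Laplace signs, so I would prefer the direct expansion above.
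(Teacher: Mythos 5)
Your proof is correct. Note that the paper itself offers no proof of this lemma at all: it is quoted as ``the well-known Cauchy--Binet formula'' and used as a black box in the proof of Theorem 4.2 (the evaluation of $\det J_p(2)$), so there is no argument in the paper to compare yours against. What you give is the standard and complete derivation: expand $\det(MN)$ by the Leibniz formula, distribute the product to get a sum over index functions $\ell\colon\{1,\dots,r\}\to\{1,\dots,n\}$, recognize the inner $\sigma$-sum as $\det\bigl[(N_{\ell_i,k})_{i,k}\bigr]$ so that non-injective $\ell$ drop out, and then match each injective $\ell$ with a pair $(\s,\rho)$ so that the sign $\sgn(\rho)$ from sorting the rows of $N$ reassembles the $M$-factors into $\det(M_{\s})$. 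The crux you flagged --- the bijection $\ell\leftrightarrow(\s,\rho)$ and the sign bookkeeping --- is handled correctly: writing $\ell_i=j_{\rho(i)}$, the matrix $(N_{\ell_i,k})_{i,k}$ is the row-permutation of $N^{\s}$ by $\rho$, giving $\sgn(\rho)\det(N^{\s})$, and the remaining sum $\sum_{\rho}\sgn(\rho)\prod_i M_{i,j_{\rho(i)}}$ is exactly the Leibniz expansion of $\det(M_{\s})$. Your alternative sketch via the block matrix and generalized Laplace expansion would also work, but as you say it only relocates the sign accounting; the direct expansion is the cleaner self-contained route.
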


In the remaining part of this section, we let $n=(p-1)/2$. Then it is clear that $1^2,2^2,\cdots,n^2$ are exactly all nonzero squares of $\mathbb{F}_p$. Also, we define 
\begin{equation}\label{Eq. definition of Sq}
	S_p:=\prod_{1\le i<j\le n}\left(\chi(j^2)-\chi(i^2)\right).
\end{equation}

The next result determines the explicit value of $S_p^2$. 

\begin{lemma}\label{Lem. explicit value of Sq}
	Let notations be as above. Then 
	$$S_p^2=(-1)^{\frac{n^2+n+2}{2}}n^n.$$
\end{lemma}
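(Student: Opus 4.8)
The plan is to mimic the proof of Lemma~\ref{Lem. explicit value of Delta q} almost verbatim, with the full collection $\{\chi(a_j)\}$ replaced by the sub-collection coming from squares. The crucial first step is an identification: as $i$ runs over $1,2,\dots,n$, the numbers $\chi(i^2)$ run exactly over the group $\mu_n$ of $n$-th roots of unity. Indeed, $1^2,2^2,\dots,n^2$ are the distinct nonzero squares of $\mathbb{F}_p$, and since $\chi$ is a generator of $\widehat{\mathbb{F}_p^{\times}}$ it is injective on $\mathbb{F}_p^{\times}$; moreover, if $g$ generates $\mathbb{F}_p^{\times}$ then $\chi(g)$ has order $p-1$, so $\chi(g)^2$ has order $n=(p-1)/2$. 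Writing each square as $g^{2k}$, we get $\{\chi(i^2):1\le i\le n\}=\{\chi(g)^{2k}:0\le k\le n-1\}=\mu_n$, and in particular the $\chi(i^2)$ are pairwise distinct.

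Next I would pass from $S_p$ to a product over ordered pairs. Abbreviating $b_i=\chi(i^2)$, the same bookkeeping as in Lemma~\ref{Lem. explicit value of Delta q} gives
\begin{equation*}
	S_p^2=(-1)^{\binom{n}{2}}\prod_{1\le i\neq j\le n}\left(b_j-b_i\right)=(-1)^{\binom{n}{2}}\prod_{1\le j\le n}\prod_{i\neq j}\left(b_j-b_i\right).
\end{equation*}
By the first paragraph, the $b_j$ are precisely the roots of $H(t):=t^n-1=\prod_{1\le j\le n}(t-b_j)$, so the inner product is a derivative value: $\prod_{i\neq j}(b_j-b_i)=H'(b_j)=n\,b_j^{\,n-1}=n\,b_j^{-1}$, using $b_j^n=1$.

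Finally I would assemble the sign. Taking the product over $j$ yields $\prod_{1\le j\le n}H'(b_j)=n^n\prod_{1\le j\le n}b_j^{-1}$, and since the $b_j$ are all the roots of $t^n-1$ their product equals $(-1)^{n+1}$, whence $\prod_j b_j^{-1}=(-1)^{n+1}$. Combining, $S_p^2=(-1)^{\binom{n}{2}+n+1}n^n$, and the elementary identity $\binom{n}{2}+n+1=\tfrac{n^2+n+2}{2}$ gives the stated value.

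The only genuine obstacle is the opening identification of $\{\chi(i^2):1\le i\le n\}$ with $\mu_n$: this is exactly where the hypothesis that $\chi$ is a \emph{generator} (and not merely a character) is needed, since it guarantees both injectivity and that $\chi(g)^2$ has full order $n$. Everything after that is the Vandermonde-via-derivative computation already carried out in Lemma~\ref{Lem. explicit value of Delta q}, and the remaining difficulty is purely the parity bookkeeping in the final sign.
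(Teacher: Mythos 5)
Your proof is correct and follows essentially the same route as the paper: the factorization $H(t)=t^n-1=\prod_{j}\bigl(t-\chi(j^2)\bigr)$, the rewriting of $S_p^2$ as a signed product over ordered pairs, the evaluation $\prod_{i\neq j}\bigl(\chi(j^2)-\chi(i^2)\bigr)=H'\bigl(\chi(j^2)\bigr)=n\chi(j^2)^{-1}$, and the product of roots $(-1)^{n+1}$ for the final sign. Your explicit justification that $\{\chi(i^2):1\le i\le n\}=\mu_n$ is a detail the paper leaves implicit, but it does not change the argument.
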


\begin{proof}
	Since $\chi$ is a generator of $\widehat{\mathbb{F}_p^{\times}}$, we have 
	\begin{equation}\label{Eq. definition of H(t)}
		H(t):=t^n-1=\prod_{1\le j\le n}\left(t-\chi(j^2)\right).
	\end{equation}
	By (\ref{Eq. definition of H(t)}) we obtain 
	\begin{equation}\label{Eq. product of squares of Fq}
		\prod_{1\le j\le n}\chi(j^2)=(-1)^{n+1}.
	\end{equation}
	
	Now by (\ref{Eq. definition of H(t)}) and (\ref{Eq. product of squares of Fq}) one can verify that 
	\begin{align*}
		S_p^2
		&=(-1)^{\frac{n(n-1)}{2}}\prod_{1\le i\neq j\le n}\left(\chi(j^2)-\chi(i^2)\right)\\
		&=(-1)^{\frac{n(n-1)}{2}}\prod_{1\le j\le n}\prod_{i\neq j}H'\left(\chi(j^2)\right)\\
		&=(-1)^{\frac{n(n-1)}{2}}n^n\prod_{1\le j\le n}\chi(j^2)^{-1}\\
		&=(-1)^{\frac{n^2+n+2}{2}}n^n
	\end{align*}
	This completes the proof.
\end{proof}

Let $t_1,t_2,\cdots,t_n$ be variables. For any positive integer $k$, we let 
$$P_k(t_1,t_2,\cdots,t_n)=t_1^k+t_2^k+\cdots+t_n^k.$$
Also, the $k$-th elementary symmetric polynomial of $t_1,\cdots,t_n$ is denoted by 
$e_k(t_1,\cdots,t_n)$, i.e., 
$$e_k(t_1,\cdots,t_n)=\sum_{1\le j_1<\cdots<j_k\le n}\prod_{r=1}^kt_{j_r}.$$
Also, we let $e_0(t_1,\cdots,t_n)=1$. 

We need the following result. 

\begin{lemma}\label{Lem. sums involving kth powers of squares}
	Let $\chi(i^2)=x_i$ for $1\le i\le n$. Then for any $1\le k\le n-1$, 
	$$P_k(x_1,x_2,\cdots,x_n)=x_1^k+x_2^k+\cdots+x_n^k=0$$
	
\end{lemma}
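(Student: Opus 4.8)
The claim is that the power sums $P_k(x_1,\dots,x_n)=\sum_{i=1}^n \chi(i^2)^k$ vanish for $1\le k\le n-1$, where $x_i=\chi(i^2)$ and $n=(p-1)/2$. The key observation is that $x_i^k=\chi(i^2)^k=\chi^k(i^2)=\chi^{2k}(i)$, so that
\begin{equation*}
	P_k(x_1,\dots,x_n)=\sum_{1\le i\le n}\chi^{2k}(i).
\end{equation*}
Since $1^2,2^2,\dots,n^2$ enumerate exactly the nonzero squares of $\mathbb{F}_p$, and since each nonzero square $s$ has exactly two square roots $\pm\sqrt{s}$ in $\mathbb{F}_p^{\times}$, the values $\chi(i^2)$ for $1\le i\le n$ range precisely over the distinct elements $\chi(s)$ as $s$ runs through the squares. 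The plan is therefore to recognize the sum $\sum_{i=1}^n\chi^{2k}(i)$ as a character sum over a subgroup and invoke the standard orthogonality relation for multiplicative characters.

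\textbf{Key steps.}
First I would rewrite the sum in terms of the group structure. The squares form the unique subgroup $(\mathbb{F}_p^{\times})^2$ of $\mathbb{F}_p^{\times}$ of index $2$ and order $n$. Writing $s=i^2$ for $s$ ranging over this subgroup (each value attained once as $i$ runs from $1$ to $n$), we get
\begin{equation*}
	P_k(x_1,\dots,x_n)=\sum_{s\in(\mathbb{F}_p^{\times})^2}\chi^k(s).
\end{equation*}
Second, I would analyze the restriction of $\chi^k$ to the subgroup $(\mathbb{F}_p^{\times})^2$. Since $\chi$ generates $\widehat{\mathbb{F}_p^{\times}}$, which is cyclic of order $p-1=2n$, the character $\chi^k$ is trivial on the subgroup of squares if and only if $\chi^k(g^2)=1$ for a generator $g$ of $\mathbb{F}_p^{\times}$, i.e.\ if and only if $2n\mid 2k$, that is $n\mid k$. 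For the range $1\le k\le n-1$ this never happens, so $\chi^k$ restricts to a nontrivial character of the cyclic group $(\mathbb{F}_p^{\times})^2$. Third, by the orthogonality relation, the sum of a nontrivial character over a finite group is zero, giving $P_k(x_1,\dots,x_n)=0$ as desired.

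\textbf{The main obstacle.}
There is no serious analytic difficulty here; the entire content is bookkeeping about which power $\chi^k$ becomes trivial on the index-two subgroup. The one point requiring care is the exact range of $k$: the argument shows vanishing precisely when $n\nmid k$, and one must verify that $1\le k\le n-1$ lies strictly inside one period, so that triviality (which first occurs at $k=n$) is avoided. Thus the hypothesis $k\le n-1$ is sharp, and indeed at $k=n$ the character $\chi^n$ is the quadratic character restricted to squares, which is trivial, making $P_n=n\neq 0$. I would present the orthogonality step cleanly and flag that the upper bound $n-1$ is exactly what keeps $\chi^k$ nontrivial on the squares.
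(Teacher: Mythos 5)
Your proof is correct, but it takes a genuinely different route from the paper. The paper never mentions orthogonality: it observes that by the factorization $H(t)=t^n-1=\prod_{1\le j\le n}\left(t-\chi(j^2)\right)$ (its equation (4.3)), all the elementary symmetric polynomials $e_k(x_1,\dots,x_n)$ with $1\le k\le n-1$ vanish (they are, up to sign, the coefficients of $t^{n-k}$ in $t^n-1$), and then invokes Newton's identities, which express $P_k$ as a polynomial with no constant term in $e_1,\dots,e_k$, to conclude $P_k=0$. You instead recognize $P_k=\sum_{s\in(\mathbb{F}_p^{\times})^2}\chi^k(s)$ as the sum of the character $\chi^k$ restricted to the index-two subgroup of squares, check that this restriction is nontrivial exactly when $n\nmid k$, and apply orthogonality. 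Your argument is more conceptual and buys something the paper's does not state: the precise criterion for vanishing ($P_k=0$ if and only if $n\nmid k$, with $P_k=n$ otherwise), which makes the sharpness of the hypothesis $k\le n-1$ transparent. The paper's argument, by contrast, is purely polynomial-theoretic -- it uses only that the $x_i$ are the roots of $t^n-1$, a fact it has already set up in (4.3) and reuses elsewhere in the proof of Theorem 4.2 -- so it keeps the whole section running on one piece of machinery. The two proofs rest on the same underlying fact (the values $\chi(j^2)$ are exactly the $n$-th roots of unity); only the mechanism for extracting the vanishing of power sums differs.
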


\begin{proof}
	By (\ref{Eq. definition of H(t)}) it is easy to see that 
	$$e_k:=e_k(x_1,x_2,\cdots,x_n)=0$$
	for any $1\le k\le n-1$. By the Newton identities, $P_k(x_1,\cdots,x_n)$ can be written as a polynomial of $e_1,e_2,\cdots,e_k$. This implies that 
	$$P_k(x_1,x_2,\cdots,x_n)=x_1^k+x_2^k+\cdots+x_n^k=0$$
	for any $1\le k\le n-1$. 
	This completes the proof.
\end{proof}

Now we are in a position to prove our last theorem.

{\noindent{\bf Proof of Theorem \ref{Thm. Jq(2)}.}} Note first that for any $1\le i,j\le n-1$ we have 
\begin{align*}
	J_q(\chi^{2i},\chi^{2j})
	&=\sum_{k=1}^{p-1}\chi^i(k^2)\chi^j((1-k)^2)\\
	&=\sum_{k=1}^n\chi^{i}(k^2)\left(\chi^j((1-k)^2)+\chi^j((1+k)^2)\right).
\end{align*}
By this one can verify the following matrix decomposition
\begin{equation}\label{Eq. matrix decomposition of Jq(2)}
	J_p(2)=AB,
\end{equation}
where $A$ is an $(n-1)\times n$ matrix defined by 
$$A:=\left[\chi^i(k^2)\right]_{1\le i\le n-1,\ 1\le k\le n},$$
and $B$ is an $n\times (n-1)$ matrix defined by 
$$B:=\left[\chi^j((1-k)^2)+\chi^j((1+k)^2)\right]_{1\le k\le n,\ 1\le j\le n-1}.$$
Also, for any $1\le k\le n$, we let $A_{(k)}$ be the submatrix of $A$ obtained by deleting the $k$-th column of $A$ and let $B^{(k)}$ be the submatrix of $B$ obtained by deleting the $k$-th row of $B$. By Lemma \ref{Lem. Cauchy-Binet formula} we have 
\begin{equation}\label{Eq. app of CB formula in the proof of Jq(2)}
	\det J_p(2)=\sum_{k=1}^n\det A_{(k)}\det B^{(k)}.
\end{equation}

We first consider $\det A_{(k)}$. It is clear that  
\begin{align*}
	\det A_{(k)}
	&=\left|
	\begin{array}{ccccccc}
		\chi^1 (1^2)& \cdots &\chi^1((k-1)^2)  & \chi^1 ((k+1)^2) & \cdots &\chi^1(n^2) \\
		\chi^2 (1^2)& \cdots &\chi^2((k-1)^2)  & \chi^2 ((k+1)^2) & \cdots &\chi^2(n^2) \\
		\vdots  & \vdots  & \vdots &\vdots & \ddots & \vdots \\
		\chi^{n-1} (1^2)& \cdots &\chi^{n-1}((k-1)^2)  & \chi^{n-1} ((k+1)^2) & \cdots &\chi^{n-1}(n^2) \\
	\end{array}
	\right|.
\end{align*}
Combining this with (\ref{Eq. definition of H(t)}) and (\ref{Eq. product of squares of Fq}), one can verify that 
\begin{align}
	\det A_{(k)}
	&= \prod_{i\neq k}\chi(i^2)\cdot\frac{\prod_{1\le i<j\le n}\left(\chi(j^2)-\chi(i^2)\right)}{\prod_{1\le i\le k-1}\left(\chi(k^2)-\chi(i^2)\right)\prod_{k+1\le i\le n}\left(\chi(i^2)-\chi(k^2)\right)} \notag \\
	&=\frac{S_p\cdot \prod_{1\le i\le n}\chi(i^2)}{\chi(k^2)(-1)^{n-k}\prod_{i\neq k}\left(\chi(k^2)-\chi(i^2)\right)} \notag \\
	&=\frac{(-1)^{k+1}}{n}S_p. \label{Eq. explicit value of det A(k)}
\end{align}
The last equality follows from 
$$\prod_{i\neq k}\left(\chi(k^2)-\chi(i^2)\right)=H'\left(\chi(k^2)\right)=n\chi(k^2)^{-1}.$$

Applying (\ref{Eq. explicit value of det A(k)}) to (\ref{Eq. app of CB formula in the proof of Jq(2)}) and by the determinant expansion formula, we obtain 
\begin{equation}\label{Eq. the reason why we consider B tilde}
	\det J_p(2)=\frac{S_p}{n}\sum_{k=1}^n(-1)^{k+1}\det B^{(k)}=\frac{S_p}{n}\det\widetilde{B},
\end{equation}
where $\widetilde{B}$ is an $n\times n$ matrix obtained by adding an $n\times 1$ column vector with all elements $1$ before the first column of $B$.

We now turn to $\det \widetilde{B}$. For simplicity, we let $\chi(i^2)=x_i$ for $0\le i\le p-1$. Note that $x_0=0,x_1=1$ and $x_i=x_{p-i}$ for any $1\le i\le p-1$. Using these notations we have 
\begin{equation}\label{Eq. step 1 to det tilde B}
	\det \widetilde{B}=
	\left| \begin{array}{ccccc}
		1 & x_0+x_2 & x_0^2+x_2^2 &\cdots &   x_0^{n-1}+x_2^{n-1} \\
		1 & x_1+x_3 & x_1^2+x_3^2 &\cdots &   x_1^{n-1}+x_3^{n-1} \\
		1 & x_2+x_4 & x_2^2+x_4^2 &\cdots &   x_2^{n-1}+x_4^{n-1} \\
		\vdots & \vdots & \vdots & \ddots & \vdots \\
		1 & x_{n-1}+x_{n+1} & x_{n-1}^2+x_{n+1}^2 &\cdots &   x_{n-1}^{n-1}+x_{n+1}^{n-1} 
	\end{array} \right|.
\end{equation}
By Lemma \ref{Lem. sums involving kth powers of squares} and noting that $x_n=x_{n+1}$, we have 
\begin{equation}\label{Eq. step 2 to det tilde B}
	\sum_{r=0}^{n-1}\left(x_r^k+x_{r+2}^k\right)=-x_1^k+2\sum_{r=1}^{n}x_r^k=-x_1^k
\end{equation}
for any $1\le k\le n-1$. 

In (\ref{Eq. step 1 to det tilde B}), adding the $2$nd row, $3$rd row, $\cdots$, $n$-th row to the $1$st row and by (\ref{Eq. step 2 to det tilde B}), we obtain 

\begin{equation}\label{Eq. step 3 to det tilde B}
	\det \widetilde{B}
	=\left| \begin{array}{ccccc}
		n & -x_1    & -x_1^2      &\cdots &   -x_1^{n-1} \\
		1 & x_1+x_3 & x_1^2+x_3^2 &\cdots &   x_1^{n-1}+x_3^{n-1} \\
		1 & x_2+x_4 & x_2^2+x_4^2 &\cdots &   x_2^{n-1}+x_4^{n-1} \\
		\vdots & \vdots & \vdots  & \ddots & \vdots \\
		1 & x_{n-1}+x_{n+1} & x_{n-1}^2+x_{n+1}^2 &\cdots &   x_{n-1}^{n-1}+x_{n+1}^{n-1} 
	\end{array} \right|.
\end{equation}

In (\ref{Eq. step 3 to det tilde B}), by adding the $1$st row to the $2$nd row we have 
\begin{equation}\label{Eq. step 4 to det tilde B}
	\det \widetilde{B}
	=-\frac{1}{2}\left| \begin{array}{ccccc}
		-2n & 2x_1    & 2x_1^2      &\cdots &   2x_1^{n-1} \\
		n+1 & x_3     & x_3^2       &\cdots &   x_3^{n-1} \\
		1   & x_2+x_4 & x_2^2+x_4^2 &\cdots &   x_2^{n-1}+x_4^{n-1} \\
		\vdots & \vdots & \vdots  & \ddots & \vdots \\
		1   & x_{n-1}+x_{n+1} & x_{n-1}^2+x_{n+1}^2 &\cdots &  x_{n-1}^{n-1}+x_{n+1}^{n-1} 
	\end{array} \right|.
\end{equation}
By Lemma \ref{Lem. sums involving kth powers of squares} again, for $1\le k\le n-1$ we have  
\begin{equation}\label{Eq. step 5 to det tilde B}
	2x_1^k+x_3^k+\sum_{r=2}^{n-1}\left(x_r^k+x_{r+2}^k\right)=-x_2^k+2\sum_{r=1}^{n}x_r^k=-x_2^k.
\end{equation}

In (\ref{Eq. step 4 to det tilde B}), adding the $2$nd row, $3$rd row, $\cdots$, $n$th row to the $1$st row and by (\ref{Eq. step 5 to det tilde B}), we have 
\begin{equation}\label{Eq. step 6 to det tilde B}
	\det \widetilde{B}
	=\frac{1}{2}\left| \begin{array}{ccccc}
		1 & x_2    & x_2^2      &\cdots &   x_2^{n-1} \\
		n+1 & x_3     & x_3^2       &\cdots &   x_3^{n-1} \\
		1   & x_2+x_4 & x_2^2+x_4^2 &\cdots &   x_2^{n-1}+x_4^{n-1} \\
		\vdots & \vdots & \vdots  & \ddots & \vdots \\
		1   & x_{n-1}+x_{n+1} & x_{n-1}^2+x_{n+1}^2 &\cdots &  x_{n-1}^{n-1}+x_{n+1}^{n-1} 
	\end{array} \right|.
\end{equation}

In (\ref{Eq. step 6 to det tilde B}), by using the row operations recursively, one can verify that 
\begin{equation}\label{Eq. step 7 to det tilde B}
	\det \widetilde{B}
	=\frac{1}{2}\left| \begin{array}{ccccc}
		1          & x_2    & x_2^2      &\cdots &   x_2^{n-1} \\
		n+1        & x_3     & x_3^2       &\cdots &   x_3^{n-1} \\
		\alpha_4   & x_4 & x_4^2 &\cdots &   x_4^{n-1} \\
		\vdots & \vdots & \vdots  & \ddots & \vdots \\
		\alpha_{n}   & x_{n} & x_{n}^2 &\cdots &  x_{n}^{n-1}\\
		\alpha_{n+1}   & x_{n+1} & x_{n+1}^2 &\cdots &  x_{n+1}^{n-1} 
	\end{array} \right|,
\end{equation}
where 
\begin{equation}\label{Eq. definition of alpha k in the proof of Thm. 3}
	\alpha_k=\begin{cases}
		\frac{1-(-1)^{k/2}}{2}  & \mbox{if}\ k\equiv 0\pmod 2,\\
		(-1)^{(k+1)/2}\cdot(n+1)+\frac{1+(-1)^{(k-1)/2}}{2} & \mbox{otherwise.}
	\end{cases}
\end{equation}

In (\ref{Eq. step 7 to det tilde B}), by subtracting the $(n-1)$-th row from the $n$-th row and noting that $x_n=x_{n+1}$, we obtain 

\begin{align}\label{Eq. step 8 to det tilde B}
	\det \widetilde{B}
	&=\frac{1}{2}\left| \begin{array}{ccccc}
		1          & x_2    & x_2^2      &\cdots &   x_2^{n-1} \\
		n+1        & x_3     & x_3^2       &\cdots &   x_3^{n-1} \\
		\alpha_4   & x_4 & x_4^2 &\cdots &   x_4^{n-1} \\
		\vdots & \vdots & \vdots  & \ddots & \vdots \\
		\alpha_{n}   & x_{n} & x_{n}^2 &\cdots &  x_{n}^{n-1}\\
		\alpha_{n+1}-\alpha_n  &0 & 0 &\cdots &  0
	\end{array} \right| \notag \\
	&=\frac{1}{2}\cdot (-1)^{n+1} \cdot \beta_p\cdot 
	\left|\begin{array}{ccccc}
		x_2 & x_2^2 &\cdots & x_2^{n-1} \\
		x_3 & x_3^2 &\cdots &   x_3^{n-1} \\
		x_4 & x_4^2 &\cdots &   x_4^{n-1} \\
		\vdots & \vdots & \ddots & \vdots \\
		x_n & x_n^2 &\cdots &   x_n^{n-1} \\
	\end{array}\right|,
\end{align}
where $\beta_p=\alpha_{n+1}-\alpha_{n}$ and by (\ref{Eq. definition of alpha k in the proof of Thm. 3}) one can verify that 
\begin{equation}\label{Eq. explicit value of beta p in the proof of Thm. 3}
	\beta_p=(-1)^{\frac{n(n-1)}{2}+n+1}\cdot\frac{p+(-1)^{n+1}}{2}. 
\end{equation}

Now by (\ref{Eq. step 8 to det tilde B}) and (\ref{Eq. product of squares of Fq}) we finally obtain 
\begin{align}
	\det \widetilde{B}
	&=\frac{1}{2}\cdot (-1)^{n+1} \cdot \beta_p\left(\prod_{2\le i\le n}x_i\right)\prod_{2\le i<j\le n}\left(x_j-x_i\right) \notag \\
	&=\frac{1}{2}\cdot \beta_p\cdot \frac{S_p}{\prod_{2\le i\le n}\left(x_i-x_1\right)} \notag \\
	&=\frac{(-1)^{n-1}}{2n}\beta_pS_p. \label{Eq. explicit value of DET tilde B}
\end{align}
The last equality follows from 
$$\prod_{2\le i\le n}\left(x_i-x_1\right)=(-1)^{n-1}H'(x_1)=(-1)^{n-1}n.$$

By (\ref{Eq. the reason why we consider B tilde}), (\ref{Eq. explicit value of beta p in the proof of Thm. 3}), (\ref{Eq. explicit value of DET tilde B}) and Lemma \ref{Lem. explicit value of Sq}, we finally obtain 
\begin{equation*}
	\det J_p(2)=\frac{S_p}{n}\det\widetilde{B}=\frac{1+(-1)^{\frac{p+1}{2}}p}{4}\left(\frac{p-1}{2}\right)^{\frac{p-5}{2}}.
\end{equation*}

In view of above, we have completed the proof of Theorem \ref{Thm. Jq(2)}.\qed

\begin{remark}\label{Remark. Why we cannot generalize Thm. 3}
	Now we explain why our method used in the above proof cannot be applied to $\mathbb{F}_{p^l}$ with $l\ge2$. In fact, in the prime field $\mathbb{F}_p$, we can naturally list all nonzero squares as $1^2,2^2,\cdots,n^2$. Let $x_i=\chi(i^2)$ for $0\le i\le q-1$. Then it is easy to verify that the $(i,j)$-entry of the matrix $B$ defined by (\ref{Eq. matrix decomposition of Jq(2)}) can be simplified as $x_{i-1}^j+x_{i+1}^j$. 
	
	However, for $\mathbb{F}_{p^l}$ with $l\ge2$, if we choose elements $a_1,\cdots,a_{(p^l-1)/2}$ such that 
	$$a_1^2,\cdots,a_{(p^l-1)/2}^2$$
	are exactly all the nonzero squares of $\mathbb{F}_{p^l}$. Then we can easily get the matrix decomposition 
	$$J_q(2)=A'B',$$
	where 
	$$A'=\left[\chi^i(a_k^2)\right]_{1\le i\le \frac{p^l-3}{2},\ 1\le k\le \frac{p^l-1}{2}},$$
	and 
	$$B'=\left[\chi^j\left((1-a_k)^2\right)+\chi^j\left((1+a_k)^2\right)\right]_{1\le k\le \frac{p^l-1}{2},\ 1\le j\le \frac{p^l-3}{2}}.$$
	Similar to (\ref{Eq. app of CB formula in the proof of Jq(2)}), we also have 
	$$\det J_q(2)=\sum_{k=1}^{\frac{p^l-1}{2}}\det A'_{(k)}\det B'^{(k)},$$ 
	and it is easy to verify that 
	$$\det A'_{(k)}=(-1)^{k+1}\frac{2}{p^l-1}S_q,$$
	where 
	$$S_q=\prod_{1\le i<j\le (p^l-1)/2}\left(\chi(a_j^2)-\chi(a_i^2)\right).$$
	Using this we obtain 
	$$\det J_q(2)=\frac{2}{p^l-1}S_q\det\widetilde{B'},$$
	where $\widetilde{B}$ is an $(p^l-1)/2\times (p^l-1)/2$ matrix obtained by adding an $(p^l-1)/2\times 1$ column vector with all elements $1$ before the first column of $B'$.
	
	Now our main obstacle is the calculation of $\det\widetilde{B'}$. This is because we cannot represent the $(i,j)$-entry of $B'$ in a simple form like that of $B$. 
\end{remark}

\section{Concluding Remarks}

Greene \cite[Definition 2.4]{Greene} posed an analogue of binomial coefficients. For any $A,B\in\widehat{\mathbb{F}_q^{\times}}$, Greene defined 
$$\binom{A}{B}:=\frac{B(-1)}{q}J(A,\overline{B}),$$
where $\overline{B}\in\widehat{\mathbb{F}_q^{\times}}$ such that $\overline{B}(x)=B(x)^{-1}$ for any $x\in\mathbb{F}_q^{\times}$. Motivated by Greene's analogue and our theorems, investigating
$$\det\left[\binom{\chi^{i+j}}{\chi^i}\right]_{1\le i,j\le q-2}\ \text{and}\ 
\det \left[\binom{\chi^{2i+2j}}{\chi^{2i}}\right]_{1\le i,j\le (q-3)/2}$$
might be also meaningful. However, we cannot solve this problem currently.

\section{Appendix}

In this appendix we prove (\ref{Eq. det Cn}). It is known that for any positive integers $r,l$, we have 
\begin{equation}\label{Eq. Appendix A}
	\binom{r}{l}=\binom{r-1}{l-1}+\binom{r-1}{l}.
\end{equation}

Now we calculate $\det C_n=\det \left[\binom{i+j}{i}\right]_{1\le i,j\le n}$. The case $n=1$ is trivial. Assume now $n\ge2$. Subtracting the $(n-1-i)$-th row from the $(n-i)$-th row for $i=0,1,\cdots,n-2$ sequentially and by (\ref{Eq. Appendix A}), one can verify that 
\begin{equation}\label{Eq. Appendix B}
	\det C_n=\det \left[\binom{i+j}{i}\right]_{1\le i,j\le n}=
	\left|\begin{array}{ccccc}
		2 &            3 & \cdots & n+1\\
		\binom{2}{2} & \binom{3}{2} & \cdots & \binom{n+1}{2}\\
		\vdots       & \vdots       & \ddots & \vdots \\
		\binom{n}{n} & \binom{n+1}{n} & \cdots & \binom{2n-1}{n}
	\end{array}
	\right|.
\end{equation}
We next consider the right hand side of (\ref{Eq. Appendix B}). Subtracting the $(n-1-i)$-th column from the $(n-i)$-th column for $i=0,1,\cdots,n-2$ sequentially and by (\ref{Eq. Appendix A}) again, we obtain 
\begin{equation}\label{Eq. Appendix C}
	\det C_n=
	\left|\begin{array}{ccccc}
		1+1 &            1+0   & \cdots & 1+0               & 1+0 \\
		1 &   \binom{2}{1}   & \cdots & \binom{n-1}{1}    & \binom{n}{1} \\
		\vdots &   \vdots         & \ddots & \vdots            & \vdots \\
		1 & \binom{n-1}{n-2} & \cdots & \binom{2n-4}{n-2} & \binom{2n-3}{n-2} \\
		1 & \binom{n}{n-1}   & \cdots & \binom{2n-3}{n-1} & \binom{2n-2}{n-1}
	\end{array}
	\right|=\det D_n+\det C_{n-1}, 
\end{equation}
where 
$$\det D_n=\left|\begin{array}{ccccc}
	1      &            1     & \cdots & 1                 & 1 \\
	1      &  \binom{2}{1}    & \cdots & \binom{n-1}{1}    & \binom{n}{1} \\
	\vdots &  \vdots          & \ddots & \vdots            & \vdots \\
	1      & \binom{n-1}{n-2} & \cdots & \binom{2n-4}{n-2} & \binom{2n-3}{n-2} \\
	1      & \binom{n}{n-1}   & \cdots & \binom{2n-3}{n-1} & \binom{2n-2}{n-1}
\end{array}
\right|.$$
For $\det D_n$, subtracting the $(n-1-i)$-th row from the $(n-i)$-th row for $i=0,1,\cdots,n-2$ sequentially and using (\ref{Eq. Appendix A}), we see that 
$$\det D_n=\left|\begin{array}{ccccc}
	1      &            1     & \cdots & 1                 & 1 \\
	0      &  \binom{1}{1}    & \cdots & \binom{n-2}{1}    & \binom{n-1}{1} \\
	\vdots &  \vdots          & \ddots & \vdots            & \vdots \\
	0      & \binom{n-2}{n-2} & \cdots & \binom{2n-5}{n-2} & \binom{2n-4}{n-2} \\
	0      & \binom{n-1}{n-1} & \cdots & \binom{2n-4}{n-1} & \binom{2n-3}{n-1}
\end{array}
\right|=\left|\begin{array}{ccccc}
	1      & \cdots & \binom{n-2}{1}    & \binom{n-1}{1} \\
	\vdots & \ddots & \vdots            & \vdots \\
	1      & \cdots & \binom{2n-5}{n-2} & \binom{2n-4}{n-2} \\
	1      & \cdots & \binom{2n-4}{n-1} & \binom{2n-3}{n-1}
\end{array}
\right|.$$
Repeating the above procedure, one can easily verify that 
\begin{equation}\label{Eq. Appendix D}
	\det D_n=1.
\end{equation}
Combining (\ref{Eq. Appendix D}) with (\ref{Eq. Appendix C}) and noting that $\det C_1=2$, we finally obtain $\det C_n=n+1$. This completes the proof of (\ref{Eq. det Cn}).

\subsection*{Acknowledgements}

The authors would like to thank the referee for careful reading and helpful comments. This research was supported by the National Natural Science Foundation of China (Grant Nos. 12101321, 12201291 and 12071208). The first author was also supported by Natural Science Foundation of Nanjing University of Posts and Telecommunications (Grant No. NY224107).

\end{document}